\theoremstyle{plain}
\newtheorem{lem}{Lemma}[section]
\newtheorem{thm}[lem]{Theorem}
\newtheorem{prb}[lem]{Problem}
\newtheorem{prp}[lem]{Proposition}
\theoremstyle{definition}
\newtheorem*{exm}{Example}
\newcommand{\inv}{^{-1}}                
\newcommand{\Id}[1]{\mathrm{id}_{#1}}   
\newcommand{\IS}{\mathcal{I}}           
\newcommand{\Aut}{\mathrm{Aut}}         
\newcommand{\PAut}{\mathrm{PAut}}       
\newcommand{\img}{\mathrm{img}}         
\newcommand{\TC}{\mathrm{TC}}           
\newcommand{\tr}{\mathrm{tr}}           
\DeclareMathOperator{\greenL}{\mathcal{L}}
\DeclareMathOperator{\greenR}{\mathcal{R}}
\DeclareMathOperator{\greenH}{\mathcal{H}}
\title{Abelianness and centrality in inverse semigroups}
\author{Michael Kinyon}
\address[Kinyon]{Department of Mathematics, University of Denver, Denver, CO 80208, USA}
\email{mkinyon@du.edu}
\author{David Stanovsk\'{y}${}^\dag$}
\thanks{${}^\dag$ Partially supported by the cooperation grant LTAUSA19070 from M\v{S}MT \v{C}R}
\address[Stanovsk\'{y}]{Department of Algebra, Faculty of Mathematics and Physics,
Charles University, Sokolovsk\'{a} 83, 18675 Praha 8, Czech Republic}
\email{stanovsk@karlin.mff.cuni.cz}
\date{\today}
\begin{document}

\begin{abstract}
We adapt the abstract concepts of abelianness and centrality of universal algebra to the
context of inverse semigroups. We characterize abelian and central congruences in terms of
the corresponding congruence pairs. We relate centrality to conjugation in inverse semigroups.
Subsequently we prove that solvable and nilpotent inverse semigroups are groups.
\end{abstract}

\keywords{inverse semigroup, congruence, abelianness, centrality, nilpotence, solvability}

\subjclass{Primary: 20M18; Secondary: 20F19, 08A30}

\maketitle

\section{Introduction}

\subsection{Motivation}

Abstract commutator theory of universal algebra \cite{FM} was developed to capture, among other things, the concepts of abelianness and centrality, and consequently,
solvability and nilpotence for general algebraic structures. Commutator theory proved immensely useful in solving various problems of universal algebra, equational logic
and combinatorics of functions; see \cite{MS} for a list of applications and references.

Interpreting the abstract concepts of commutator theory in a concrete class is, in general, not an easy task. In groups, it is not difficult to show that the universal algebraic notions coincide with the classical ones. In less familiar structures like loops (quasigroups with identity elements), this is not the case: universal algebraic nilpotence coincides with the classical central nilpotence, but classical and universal algebraic solvability are different \cite{SV,SV2}. Another example of a successful adaptation of commutator theory is the class of racks and quandles \cite{BS}.

The ideas surrounding commutator theory, such as term conditions, were first adapted to semigroup theory by McKenzie \cite{McKsemi}, Taylor \cite{Taylor} and Warne \cite{Warne}. The latter showed, for example, that an abelian regular semigroup is a subsemigroup of a rectangular group with commutative group component. We note that although in semigroup theory, the word ``abelian'' is often used as a synonym for ``commutative'' (as in group theory), in this paper we will only use ``abelian'' in the universal algebra sense. Commutator theory itself has recently been applied to Rees matrix semigroups \cite{MR}, inverse semigroups \cite{RM1} and regular semigroups \cite{RM2}.

In the present paper, we focus on inverse semigroups \cite{Lawson,Petrich}. Inverse semigroups include groups and semilattices as special cases, and as advocated in \cite{Lawson}, they are the natural algebraic framework for the study of partial symmetries. They have applications in a variety of areas, such as logic and $\lambda$-calculus, operator algebras \cite{Paterson}, automata theory and computer science. It is therefore rather natural to see what exactly commutator theory can bring to inverse semigroups.

Commutator theory has strongest properties for congruence modular varieties; for instance, this assumption is used to prove commutativity of the commutator. Inverse semigroups possess a weak difference term \cite{Lipp}, but do not form a congruence modular variety, hence one cannot expect the commutator to shine in its full strength. Therefore we specialize to the derived notions of abelianness and centrality. We characterize abelian and central congruences (Theorem \ref{Thm:abelian and central congruence}), we describe the center of an inverse semigroup (Theorem \ref{thm:center}), and subsequently characterize solvable / nilpotent inverse semigroups as solvable / nilpotent groups (Theorem \ref{Thm:solvnilp}).

Our three main theorems are formulated more precisely in the next subsection, and their proof occupies most of the paper. Alternative approaches to nilpotence and solvability in inverse semigroups can be found in \cite{Kowol_Mitsch,Malcev,Meldrum,Piochi}; we will discuss these in Section \ref{s:others}.

\subsection{Main results}

Our first theorem is a characterization of abelian and central congruences in terms of their corresponding congruence pairs.

\begin{thm}\label{Thm:abelian and central congruence}
Let $S$ be an inverse semigroup. Then
\begin{enumerate}
	\item abelian congruences of $S$ correspond to congruence pairs $(N,0_{E(S)})$ such that $N$ is a commutative normal inverse subsemigroup of $S$.
	\item central congruences of $S$ correspond to congruence pairs $(N,0_{E(S)})$ such that $N$ is a normal inverse subsemigroup of $S$ contained in
\[
Z(S) = \{a\in S\mid axa\inv a = aa\inv x a,\ \forall x\in S\}\,.
\]
\end{enumerate}
\end{thm}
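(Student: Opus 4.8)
The proof rests on the kernel--trace description of congruences: a congruence $\alpha$ of $S$ is determined by the pair $(\ker\alpha,\tr\alpha)$, where $\ker\alpha$ is a normal inverse subsemigroup and $\tr\alpha$ is a normal congruence on $E(S)$, and every such pair arises this way. In these terms each of the two statements asserts three things: that an abelian (hence also a central) congruence must separate idempotents, i.e.\ have trace $0_{E(S)}$; that among congruences with trace $0_{E(S)}$ the abelian ones are exactly those whose kernel is commutative and the central ones exactly those whose kernel lies in $Z(S)$; and that every pair $(N,0_{E(S)})$ appearing in the two lists is indeed a congruence pair. The plan is to treat these three points in turn.

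For the first point, observe that $E(S)$ is an inverse subsemigroup of $S$ which happens to be a semilattice, and that the term condition is inherited by subalgebras; so if $\alpha$ is abelian, then its restriction to $E(S)$ is an abelian congruence of a semilattice. It is easy to see, using the term $t(x,y)=xy$ on a pair $(a,b)\in\alpha$ with $ab=a\neq b$ (which may always be arranged, replacing $(a,b)$ by $(ab,a)$ if needed), that a semilattice has no nontrivial abelian congruence. Since every central congruence is abelian, the same conclusion holds for central congruences, so from now on we may assume $\tr\alpha=0_{E(S)}$.

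Under this assumption $\alpha$ is idempotent-separating, hence $\alpha\subseteq\greenH$, each $\alpha$-class of an idempotent $e$ is a subgroup $H_e$ with identity $e$, and $N=\ker\alpha=\bigcup_{e\in E(S)}H_e$ is a Clifford semigroup. For the forward direction of (1): $C(\alpha,\alpha;0_S)$ restricts to the subalgebra $H_e$, where it says exactly that the full congruence of the group $H_e$ is abelian, i.e.\ that $H_e$ is an abelian group; since in a Clifford semigroup a product of an element of $H_e$ and one of $H_f$ is computed from their images in $H_{ef}$, commutativity of every $H_e$ amounts to commutativity of $N$. For the forward direction of (2) we use the stronger condition $C(\alpha,1_S;0_S)$: fix $a\in N$, set $e=aa\inv=a\inv a$ (so $(a,e)\in\alpha$ and $ea=ae=a$), let $x\in S$ be arbitrary, and apply the term condition to $t(z,y_1,y_2)=y_1zy_2$ with the pair $(a,e)$ in the first coordinate and the pairs $(e,ex)$ and $(xe,e)$ in the other two. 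At $z=e$ both sides equal $exe$, so the values at $z=a$ must agree, i.e.\ $axe=exa$, which is precisely $axa\inv a=aa\inv xa$; hence $N\subseteq Z(S)$.

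Conversely, assume $\tr\alpha=0_{E(S)}$ and that $N$ is commutative (resp.\ $N\subseteq Z(S)$); we verify $C(\alpha,\alpha;0_S)$ (resp.\ $C(\alpha,1_S;0_S)$) directly. Every term is, up to provable equality, a word in $\cdot$ and $\inv$, and replacing an argument $a$ of such a word by an $\alpha$-related $b$ amounts to inserting the kernel element $a\inv b\in N$ at the corresponding places. Since $N$ is self-conjugate, this inserted element can be transported to the right-hand end of the word; a calculation with the commutativity of $N$ (resp.\ with the defining identity of $Z(S)$), together with the inequalities among the idempotents produced along the way, shows that the resulting right shift depends only on the word and on the inserted element, not on the values substituted for the remaining variables. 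Hence both sides of the hypothesised equation are multiplied by the same element, so equality is preserved and the term condition holds. Finally, with second component $0_{E(S)}$ the congruence-pair axioms reduce to the requirement that $N$ be a full self-conjugate inverse subsemigroup satisfying $aa\inv=a\inv a$, which holds for every commutative $N$ and (using $E(S)\subseteq Z(S)$) for every $N\subseteq Z(S)$. I expect the transport argument of this last paragraph to be the main obstacle: one must keep careful track of how the idempotents arising from conjugation interleave with the inserted kernel element along an arbitrary word, and check that commutativity (respectively the $Z(S)$-identity) really does make the accumulated shift independent of the remaining arguments; by contrast, once the term $y_1zy_2$ and the right substitutions are found, the forward directions are short.
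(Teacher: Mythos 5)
Your forward directions are correct and in places more economical than the paper's. Showing $\tr\alpha=0_{E(S)}$ by restricting to the subalgebra $E(S)$, deducing commutativity of $N_\alpha$ from abelianness of the maximal subgroups $H_e$ together with the Clifford structure, and especially the application of the term condition to $t(z,y_1,y_2)=y_1zy_2$ with the pairs $(e,a)$, $(e,ex)$, $(xe,e)$ to get $axa\inv a=aa\inv xa$ are all valid; the paper instead proves the stronger containment $\alpha\subseteq\greenH\cap\{(a,b)\mid axb=bxa\ \forall x\}$ (Lemma \ref{Lem:center1}), which it needs for Theorem \ref{thm:center}, but your weaker conclusion suffices for the statement at hand. (One small slip: with trace $0_{E(S)}$, axiom (CP2) requires every element of $N$ to commute with every idempotent of $S$ (Lemma \ref{Lem:Mitsch}), which is strictly stronger than $aa\inv=a\inv a$; both of your classes of $N$ do satisfy the stronger condition, so no harm results.)

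The converse direction, however, has a genuine gap, exactly where you feared. The mechanism you propose --- write $b=an$ with $n=a\inv b\in N$, transport $n$ to the right end, and conclude that ``both sides of the hypothesised equation are multiplied by the same element'' --- fails because the shift is not the same on both sides. Transporting $n$ past an argument $u$ uses $nu=n(uu\inv)u=u(u\inv nu)$, and $u\inv nu$ genuinely depends on $u$, not only on $n$: e.g.\ in $S=G\times\IS(X)$ with $G$ abelian, $n=(g,\Id{\{1\}})\in Z(S)$ and $u=(1,u_2)$ with $u_2\colon 1\mapsto 2$ give $u\inv nu=(g,\Id{\emptyset})$, whereas conjugating by an idempotent gives $(g,\Id{\{1\}})$. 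So already for $m(z,y_1,y_2)=y_1zy_2$ and centrality (where $u_2$ and $v_2$ are unrelated) one gets $u_1bu_2=(u_1au_2)\cdot u_2\inv nu_2$ and $v_1bv_2=(v_1av_2)\cdot v_2\inv nv_2$ with distinct right factors, and the desired implication does not follow formally from $u_1au_2=v_1av_2$. The paper's route avoids this: it first reduces centralization to the single term $m$ (Proposition \ref{Prp:just_m}), establishes $\alpha\subseteq\ker(\Psi_{N_\beta})$ (from commutativity of $N$ in the abelian case, Lemma \ref{Lem:abelian2}; from Theorem \ref{Thm:kernels} in the central case), and then in Theorem \ref{Thm:xi_TC} runs a computation that \emph{interleaves} the hypothesis $xay=uav$ with the invariance $ asa\inv=bsb\inv$ --- applied once to the idempotent $yy\inv$ and once to $vy\inv\in N_\beta$ --- before absorbing a trailing idempotent $(by)\inv by$. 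The hypothesis must be used in the middle of the chain, not merely at the end, and some such interleaved computation is what your sketch is missing.
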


Consequently, $(Z(S),0_{E(S)})$ corresponds to the largest central congruence, called (in universal algebra) the \emph{center}.

\begin{exm}
If $S$ is a group, then $Z(S)$ is the classical group-theoretic center. If $S$ is a semilattice, then the center is trivial, i.e., $Z(S)=E(S)$. The center of the symmetric inverse semigroup $\IS(X)$ of partial bijections on a set $X$ is also trivial.
\end{exm}

There is a remarkable analogy to the group theoretic center. The center of a group can be described in terms of conjugation.
Indeed, the center of a group $G$, although usually just defined as the set of elements that commute with all elements, turns out to be precisely the kernel of the homomorphism $G\to\Aut(G);\quad g\mapsto\phi_g$, where $\phi_g$ is the inner automorphism $x\mapsto gxg\inv$.

In a semigroup $S$, the word ``center'' usually refers to the subsemigroup $C(S) = \{ a\in S\mid ax=xa\,\forall x\in S\}$; see Section \ref{s:others} for how this is related to our $Z(S)$. A better analogy to the group-theoretic center is based on conjugation. As an equivalence relation, conjugation in inverse semigroups is thoroughly studied in \cite{AKKM}. Here we think of it as defining mappings on inverse semigroups. In fact, conjugation by a fixed element $g$ defines two closely related maps. First, it defines the total transformation $\psi_g : S\to S;\quad x\mapsto gxg\inv$. Second, and more closely analogous to the group case, is the partial mapping $\phi_g : g\inv Sg\to gSg\inv$, which turns out to be a partial automorphism, i.e., an isomorphism of inverse subsemigroups. As we will see, the center of an inverse semigroup $S$ is both the kernel (relation) of the homomorphism
\[
\Psi : S\to T_S;\quad g\mapsto \psi_g
\]
into the semigroup $T_S$ of all total transformations of $S$, and also the kernel of the homomorphism
\[
\Phi : S\to \mathcal\PAut(S);\quad g\mapsto \phi_g
\]
into the inverse semigroup $\PAut(S)$ of partial automorphisms of $S$. (Details are in Section \ref{sec:inner_aut}.)

Preston \cite{Preston} was the first to study the homomorphism $\Phi : S\to \PAut(S)$ and to describe its corresponding congruence pair. Mel'nik \cite{Melnik} and Petrich \cite{PetrichHull} later proved similar results independently, the latter as part of his larger theory of conjugate extensions. Petrich's paper does not cite Preston's, but in his later book \cite{Petrich}, Petrich credits both Preston and Mel'nik. Petrich called our $Z(S)$ the \emph{metacenter} of $S$; Mel'nik had earlier called it the \emph{quasicenter}.

The center in our (universal algebra) sense can also be characterized syntactically, in analogy with the standard group-theoretical description. All descriptions are summarized in the following theorem.

\begin{thm}\label{thm:center}
Let $S$ be an inverse semigroup. The following congruences are equal:
\begin{enumerate}
	\item the center of $S$ in the sense of \cite{FM},
	\item $\ker(\Psi)$,
    \item $\ker(\Phi)$,
	\item $\greenH\,\cap\, \{(a,b)\in S\times S \mid axb = bxa,\ \forall x\in S\}$.
\end{enumerate}
The corresponding congruence pair is $(Z(S),0_{E(S)})$.
\end{thm}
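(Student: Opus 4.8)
The plan is to run everything through the correspondence between congruences of an inverse semigroup and congruence pairs: a congruence $\rho$ of $S$ is recovered from its kernel and its trace $\rho|_{E(S)}$, so two congruences coincide exactly when they have the same kernel and the same trace, and by the remark following Theorem~\ref{Thm:abelian and central congruence} the center (1) is the congruence with pair $(Z(S),0_{E(S)})$. It therefore suffices to show that each of the relations in (2), (3), (4) is a congruence with that same pair. I would do $\Psi$ first, then deduce $\ker\Phi=\ker\Psi$, and finally identify the relation in (4) with $\ker\Psi$.

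For $\Psi$: the trace of $\ker\Psi$ is trivial because $\psi_e=\psi_f$ for idempotents $e,f$ gives $exe=fxf$ for all $x$, and specializing to $x=e$ and $x=f$ together with commutativity of idempotents forces $e=ef=f$. For the kernel, if $a$ is related to some $g\in E(S)$ under $\ker\Psi$ then $(a,aa\inv)\in\ker\Psi$ by the usual fact about inverse semigroup congruences, i.e.\ $axa\inv=aa\inv x\,aa\inv$ for all $x$; multiplying on the right by $a$ yields $axa\inv a=aa\inv xa$, so $a\in Z(S)$. Conversely, if $a\in Z(S)$, multiplying $axa\inv a=aa\inv xa$ on the right by $a\inv$ returns $axa\inv=aa\inv x\,aa\inv$, i.e.\ $(a,aa\inv)\in\ker\Psi$. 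Hence $\ker\Psi$ has pair $(Z(S),0_{E(S)})$, so (1)$=$(2).

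For (3): since congruences of inverse semigroups respect inversion, $(a,b)\in\ker\Psi$ gives $\psi_{a\inv}=\psi_{b\inv}$, so $\dom\phi_a=a\inv Sa=\img\psi_{a\inv}$ equals $\dom\phi_b$, and on this common domain $\phi_a$ and $\phi_b$ act by $\psi_a=\psi_b$; thus $\phi_a=\phi_b$ and $\ker\Psi\subseteq\ker\Phi$. Conversely, $a\inv Sa=(a\inv a)S(a\inv a)$ is a monoid with identity $a\inv a$, so $\phi_a=\phi_b$ forces $a\inv a=b\inv b$, and evaluating $\phi_a$ and $\phi_b$ at the element $(a\inv a)y(a\inv a)$ of the common domain gives $aya\inv=byb\inv$ for all $y$, i.e.\ $\psi_a=\psi_b$. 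Hence $\ker\Phi=\ker\Psi$, so (2)$=$(3); in particular equality of domains and images of $\phi_a,\phi_b$ shows this common congruence is contained in $\greenH$.

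Finally, for (4) put $\kappa=\greenH\cap\{(a,b)\mid axb=bxa\ \forall x\in S\}$. For $(a,b)$ with $a\greenH b$ set $p=aa\inv=bb\inv$, $q=a\inv a=b\inv b$, $c=ba\inv$, $c'=a\inv b$; routine checks give $cc\inv=c\inv c=p$, $c'(c')\inv=(c')\inv c'=q$, $b=ca=ac'$, and the associativity identity $a\inv c=c'a\inv$. If $(a,b)\in\kappa$, substituting $x\mapsto a\inv x$ in $axb=bxa$ and then multiplying on the right by $a\inv$ gives $pxc=cxp$ for all $x$, so $c$ centralizes $pSp=aSa\inv$; hence $bxb\inv=c(axa\inv)c\inv=axa\inv$ and $(a,b)\in\ker\Psi$. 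Conversely, if $(a,b)\in\ker\Psi$ then (using also $\psi_{a\inv}=\psi_{b\inv}$) $c$ centralizes $pSp$ and $c'$ centralizes $qSq=a\inv Sa$; for $u$ in the ``off-diagonal'' piece $qSp$ one has $u=a\inv(au)$ with $au\in pSp$, so $c'uc\inv=(c'a\inv)(au)c\inv=a\inv\bigl(c(au)c\inv\bigr)=a\inv(au)=u$, and applying this with $u=qxp$ gives $qxp=c'xc\inv$; multiplying on the left by $a$ and on the right by $b$ turns this into $axb=bxa$. Together with $a\greenH b$ (available from (2)$=$(3)) this yields $(a,b)\in\kappa$, so $\kappa=\ker\Psi$ and (2)$=$(4), with pair $(Z(S),0_{E(S)})$ as above. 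The step I expect to be the main obstacle is precisely this last one, translating the symmetric condition $axb=bxa$ into a pure conjugation statement and back; the device that makes it work is passing to $c=ba\inv$ and $c'=a\inv b$ in the group $\greenH$-classes of $p$ and $q$, with $a\inv c=c'a\inv$ bridging the piece $qSp$, and a persistent care-point is keeping $aa\inv$ and $a\inv a$ (hence $pSp=aSa\inv$, $qSq=a\inv Sa$ and $qSp$) apart.
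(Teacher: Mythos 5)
Your verifications that $\ker(\Psi)$, $\ker(\Phi)$ and $\greenH\cap\xi$ (where $\xi$ is the relation in (4)) all coincide and have congruence pair $(Z(S),0_{E(S)})$ are correct. The argument for (2)$=$(3) is essentially the paper's. For (2)$=$(4) the paper uses two short direct substitutions ($gxh = \psi_g(xg)h = \psi_h(xg)h = hxg$ and its converse), whereas your device of passing to $c=ba\inv$, $c'=a\inv b$ and the bridge $a\inv c = c'a\inv$ across $qSp$ is more elaborate but checks out; it buys a cleaner conceptual picture (everything reduces to $c$ centralizing the local monoid $pSp$) at the cost of more bookkeeping.

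However, there is a genuine gap at the heart of the theorem: the equivalence of (1) with the rest. You dispose of it by citing the remark after Theorem \ref{Thm:abelian and central congruence}, which asserts that the center in the sense of \cite{FM} has congruence pair $(Z(S),0_{E(S)})$. But that remark is a consequence of Theorem \ref{Thm:abelian and central congruence}(2), which is itself one of the main results whose proof in the paper is intertwined with this very part of Theorem \ref{thm:center} (both follow from Lemmas \ref{Lem:center1} and \ref{Lem:center2}); invoking it here is circular. The center in (1) is defined via the term condition, so you must actually connect $\TC(t,\alpha,1_S)$ to your concrete congruences in both directions: (a) show that $\ker(\Psi)$ centralizes $1_S$ --- the paper does this by reducing all terms to the single term $m(x,y,z)=yxz$ (Proposition \ref{Prp:just_m}) and then verifying $\TC(m,\ker(\Psi),1_S)$ by the nontrivial computation of Theorem \ref{Thm:xi_TC}; and (b) show that any congruence $\alpha$ centralizing $1_S$ is contained in $\ker(\Psi)$ --- the paper does this in Lemma \ref{Lem:center1} by deriving $\alpha\subseteq\greenH$ from Lemma \ref{Lem:ab_intersect} and then using the instance $xay\underline{a}z=x\underline{a}yaz$ of $\TC(m,\alpha,1_S)$ to force $axb=bxa$. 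Neither of these steps, nor any substitute for them, appears in your proposal, so as written it proves only (2)$=$(3)$=$(4) and their common congruence pair, not the identification with the universal-algebraic center.
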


As a consequence of our description of abelian and central congruences we characterize solvable and nilpotent (in the sense of univeral algebra) inverse semigroups in Section \ref{s:nilpotence}. Sadly, they are groups.

\begin{thm}\label{Thm:solvnilp}
An inverse semigroup is nilpotent (resp. solvable) if and only if it is a nilpotent (resp. solvable) group.
\end{thm}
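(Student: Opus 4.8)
The plan is to show that solvability (the weaker hypothesis) already forces an inverse semigroup to be a group; once that is established, the equivalence becomes the classical statement that a group is solvable (resp. nilpotent) in the universal-algebraic sense iff it is solvable (resp. nilpotent) in the classical sense, which is well known and which I would cite rather than reprove. So the real content is: an inverse semigroup $S$ with a finite derived series terminating at the bottom congruence $0_S = \{(a,a)\}$ is a group. The idea is to exploit part (1) of Theorem \ref{Thm:abelian and central congruence}: each quotient $[\alpha_{i+1},\alpha_{i+1}] = \alpha_i$ in the derived series means that $\alpha_{i+1}/\alpha_i$ is an abelian congruence of $S/\alpha_i$, hence corresponds to a congruence pair whose idempotent part is $0_{E(S/\alpha_i)}$.

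First I would isolate the crucial local fact: if $S$ has an abelian congruence $\alpha$ with corresponding congruence pair $(N,0_{E(S)})$, then (since the kernel of $\alpha$ restricted to idempotents is trivial) distinct idempotents of $S$ are never $\alpha$-related; combined with the fact that idempotents of an inverse semigroup form a semilattice $E(S)$, and that on $E(S)$ every congruence of $S$ restricts to a semilattice congruence, I want to conclude that $E(S)$ is ``rigid'' with respect to the whole derived series. Concretely, I would argue by induction down the derived series that $E(S)$ embeds into $S/\alpha_i$ for every $i$, and in particular into $S/0_S = S$ trivially — so this gives nothing yet — but that the semilattice $E(S)$ itself must be \emph{trivial}, i.e. a single point. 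The leverage is that a nontrivial semilattice is \emph{not} solvable in the universal-algebraic sense: a chain $e > f$ already fails to have any nontrivial abelian congruence whose idempotent part is $0$, because on a semilattice $E(S)=S$ and the only congruence with trivial restriction to idempotents is $0_S$ itself, so the derived series of a nontrivial semilattice is constant and never reaches $0$. Since $E(S)$ is a retract of $S$ (via $a\mapsto aa\inv$ is not a homomorphism, but $E(S)$ is a subalgebra and subalgebras of solvable algebras are solvable), $E(S)$ solvable forces $|E(S)| = 1$.

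Once $|E(S)| = 1$, the inverse semigroup has a unique idempotent $e$; then $a a\inv = a\inv a = e$ for all $a$, so $e$ is a two-sided identity and every element has a two-sided inverse, i.e. $S$ is a group. This is the step I expect to be essentially immediate from the definition of inverse semigroup. The main obstacle, and the step I would spend the most care on, is the semilattice argument: I need to verify cleanly that on an inverse semigroup $S$, any congruence contained in $\greenH$... no — more precisely, that the restriction map $\alpha \mapsto \alpha \cap (E(S)\times E(S))$ is compatible with the commutator operation well enough that solvability of $S$ forces solvability of the subalgebra $E(S)$, and then that a nontrivial semilattice is not solvable. The latter I would get directly: in a semilattice every element is idempotent, so a congruence pair $(N,0)$ with trivial idempotent part has $N\subseteq E(S)$ meeting each class in at most a point, forcing $N$ to be a single point and $\alpha = 0$; hence the only abelian congruence of a semilattice is $0$, the derived series is stationary, and a nontrivial semilattice is not solvable. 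For the nilpotent case no separate argument is needed, as nilpotence implies solvability. Finally, I would invoke the standard universal-algebra fact (see \cite{FM}) that for groups the abstract derived/lower central series coincide with the classical ones to finish the stated equivalence.
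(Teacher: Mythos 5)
Your argument is correct, and it reaches the conclusion by a genuinely different route than the paper, although both hinge on the same key fact: abelian congruences of inverse semigroups are idempotent-separating (Lemma~\ref{Lem:abelian1}, i.e.\ the trace-zero part of Theorem~\ref{Thm:abelian and central congruence}(1)). The paper inducts on the solvability length: $S/\alpha_1$ is a group by the induction hypothesis, and since $\alpha_1$ is abelian, hence idempotent-separating, $|E(S)|=|E(S/\alpha_1)|=1$ by Lemma~\ref{lm:solvnilp1}, so $S$ is a group. You instead restrict the witnessing chain to the subalgebra $E(S)$ (which is indeed a subalgebra in the language $\{\cdot,{}\inv\}$, as $e\inv=e$): the relative term condition $t(a,\bar u)\ \alpha_i\ t(a,\bar v)\Rightarrow t(b,\bar u)\ \alpha_i\ t(b,\bar v)$ manifestly survives restriction to any subalgebra in any variety, so $E(S)$ is a solvable semilattice, and since the only abelian congruence of a semilattice is the identity, the restricted chain collapses and $|E(S)|=1$. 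Your route trades the paper's induction and Lemma~\ref{lm:solvnilp1} for the (easy and fully general) closure of solvability under subalgebras; it is arguably more conceptual, while the paper's induction stays entirely inside the congruence-pair machinery it has already set up. Two points to tidy before this is watertight. First, your parenthetical analysis of a congruence pair $(N,0)$ on a semilattice is garbled: since $N\supseteq E(S)=S$, the kernel is forced to be all of $S$, and the correct one-line argument is simply that an idempotent-separating congruence on a semilattice, where every element is idempotent, is the identity relation. Second, you should phrase the hypothesis via the paper's chain definition of solvability (a chain $0_S=\alpha_0\leq\cdots\leq\alpha_n=1_S$ with $\alpha_{i+1}/\alpha_i$ abelian in $S/\alpha_i$) rather than via the derived series $[\alpha,\alpha]$; outside congruence modular varieties the two formulations need not agree, but your actual argument uses only the chain, so nothing breaks once the wording is fixed.
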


Theorem \ref{Thm:solvnilp} was independently found by Radovi\'{c} and Mudrinski \cite{RM1} using a completely different methodology.

\section{Facts about inverse semigroups}

\subsection{Basic facts}

In the present section, we summarize basic facts about inverse semigroups we need in out proofs. For details, we refer to the textbooks \cite{Howie,Petrich}.

A semigroup $S$ is \emph{regular} if each element $x\in S$ has an \emph{inverse} $x'\in S$ satisfying $xx'x=x$ and $x'xx'=x'$. In case
each element $x$ has a unique inverse $x\inv$, $S$ is said to be an \emph{inverse semigroup}. Equivalently, a semigroup $S$ is inverse if and only if it is regular and the set $E(S) = \{ e\in S\mid s^2 = s\}$ of idempotents is a commutative subsemigroup, i.e. a semilattice.

As is customary, we will view an inverse semigroup $S$ as an algebraic structure $(S,\cdot,{}\inv)$ with both a binary operation and a unary operation. As such, inverse semigroups also satisfy the identities $(x\inv)\inv = x$ and $(xy)\inv = y\inv x\inv$. Subsemigroups of inverse semigroups which are closed under inversion $x\mapsto x\inv$ are called \emph{inverse subsemigroups}.

In semigroups, Green's relation $\greenL$ is defined by $a\,\greenL\,b$ if and only if $a=b$ or $a=ub$ and $b=va$ for some $u,v\in S$. Green's relation $\greenR$ is defined dually, and Green's relation $\greenH = \greenL\cap \greenR$. The relation $\greenL$ is a right congruence, that is, $a\,\greenL\,b$ implies $ax\,\greenL\,bx$ for all $x$. Dually, $\greenR$ is a left congruence. In an inverse semigroup, the $\greenL$-class containing an element $a$ has a unique idempotent, namely $a\inv a$. Similarly, the $\greenR$-class containing $a$ has a unique idempotent, namely $aa\inv$. In particular, $\greenL$ and $\greenR$ can be determined equationally: $a\,\greenL\,b$ if and only if $a\inv a=b\inv b$ and $a\,\greenR\,b$ if and only if $aa\inv = bb\inv$. Another consequence is that an inverse semigroup $S$ is a group if and only if $|E(S)|=1$.

\subsection{Congruences}

Let $S$ be an inverse semigroup. An equivalence $\alpha$ on $S$ is called a \emph{congruence} of $S$ if it is invariant with respect to multiplication and inversion, that is, it is an inverse subsemigroup of $S\times S$. For any homomorphism $\phi : S\to T$ of inverse semigroups, $\ker(\phi) = \{(a,b)\in S\times S\mid \phi(a)=\phi(b)\}$ is a congruence.

The following is a version of Lallement's Lemma (\cite{Howie}, Lem.~2.2.4), which will be suitable for our purposes.

\begin{lem}\label{Lem:homom_images}
Let $S$ be an inverse semigroup and let $\phi : S\to T$ be a semigroup homomorphism into a semigroup $T$. Then $\img(\phi)$ is an inverse semigroup and $\phi$ preserves inverses. Consequently, $\ker(\phi)$ is a congruence.
\end{lem}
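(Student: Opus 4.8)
The plan is to prove the three assertions of Lemma~\ref{Lem:homom_images} in order, using the characterization of inverse semigroups as regular semigroups with commuting idempotents. First I would show that $\img(\phi)$ is regular: given $\phi(a)$ in the image, the element $\phi(a\inv)$ is an inverse of it, since applying $\phi$ to $a a\inv a = a$ and $a\inv a a\inv = a\inv$ gives the two required regularity equations. So every element of $\img(\phi)$ has at least one inverse lying in $\img(\phi)$, whence $\img(\phi)$ is a regular subsemigroup of $T$.

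Next I would verify that idempotents of $\img(\phi)$ commute. The subtlety here is that an idempotent $f\in\img(\phi)$ need not a priori be of the form $\phi(e)$ with $e\in E(S)$; this is exactly the content of Lallement's Lemma, which I expect to be the main obstacle. The standard trick: if $f = \phi(a)$ and $f^2 = f$, set $e = (a\inv a)(a a\inv a a\inv)\cdots$—more simply, use the idempotent $u = a\inv a$ is not quite right; instead take $g := \phi(a)$, note $g = g^2$, and observe that $a\inv a\in E(S)$ has $\phi(a\inv a)$ an idempotent in the image, but we want $f$ itself. The clean route is: since $g=g^2$, we have $g = g\cdot g$, so $g = \phi(a)\phi(a) = \phi(a^2)$, and more generally $g = \phi(a^n)$ for all $n\ge 1$; now $a\inv a^2 a\inv \in S$, and one checks $\phi$ of a suitable idempotent built from $a$ (namely $e = (a^n)\inv a^n$ for appropriate $n$, or the idempotent $a a\inv$ twisted) recovers $g$. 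Rather than belabor this, I would simply cite that by Lallement's Lemma (\cite{Howie}, Lem.~2.2.4) every idempotent in $\img(\phi)$ is the image of an idempotent of $S$; then commutativity in $\img(\phi)$ follows immediately from commutativity of $E(S)$. Hence $\img(\phi)$ is regular with commuting idempotents, i.e. an inverse semigroup.

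For the second claim, that $\phi$ preserves inverses: in the inverse semigroup $\img(\phi)$ each element has a \emph{unique} inverse. We showed $\phi(a\inv)$ is an inverse of $\phi(a)$ in $\img(\phi)$; but $(\phi(a))\inv$, computed inside $\img(\phi)$, is by definition the unique such inverse. Therefore $\phi(a\inv) = (\phi(a))\inv$, which is what ``$\phi$ preserves inverses'' means. Finally, for the third claim, $\ker(\phi)$ is visibly an equivalence relation and is closed under multiplication (if $\phi(a)=\phi(b)$ and $\phi(c)=\phi(d)$ then $\phi(ac)=\phi(a)\phi(c)=\phi(b)\phi(d)=\phi(bd)$); and it is closed under inversion because $\phi(a)=\phi(b)$ implies $(\phi(a))\inv = (\phi(b))\inv$, and by the second claim this says $\phi(a\inv)=\phi(b\inv)$. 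Thus $\ker(\phi)$ is an inverse subsemigroup of $S\times S$, i.e. a congruence. The only genuinely nontrivial input is Lallement's Lemma for the commuting-idempotents step; everything else is a routine unwinding of definitions.
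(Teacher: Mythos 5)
Your proposal is correct and follows the same overall skeleton as the paper's proof: regularity of $\img(\phi)$ from $\phi(aa\inv a)=\phi(a)$, lifting of idempotents so that commutativity of $E(S)$ transfers to $E(\img(\phi))$, uniqueness of inverses in the resulting inverse semigroup to get $\phi(a\inv)=\phi(a)\inv$, and then closure of $\ker(\phi)$ under inversion. The one genuine divergence is the idempotent-lifting step. The paper proves it from scratch by exhibiting the explicit element $h=aa^{-2}a$, checking $hh=h$ and $\phi(h)=\phi(a)$ via $\phi(a)\phi(a^{-2})\phi(a)=\phi(a)^2\phi(a^{-2})\phi(a)^2=\phi(a^2a^{-2}a^2)=\phi(a)$; you instead cite Lallement's Lemma (\cite{Howie}, Lem.~2.2.4). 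That citation is legitimate and not circular: Lallement's Lemma applies to the multiplicative congruence $\ker(\phi)$ on the regular semigroup $S$, which is automatic for any semigroup homomorphism, whereas what the present lemma adds is compatibility with inversion. So your argument buys brevity at the cost of self-containment --- note that the paper explicitly frames this lemma as ``a version of Lallement's Lemma'' and therefore supplies its own proof rather than importing it. Your tentative guesses at the lifting element ($a\inv a$, or $(a^n)\inv a^n$) would not have worked directly, since at that stage you do not yet know $\phi$ preserves inverses, so $\phi((a^n)\inv a^n)$ cannot be simplified to $\phi(a)$; the element $aa^{-2}a$ is the one that makes the computation go through using only that $\phi$ is multiplicative. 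Everything else in your write-up matches the paper's argument step for step.
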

\begin{proof}
For every $a\in S$, we have $\phi(a)\phi(a\inv)\phi(a)=\phi(aa\inv a)=\phi(a)$; similarly, $\phi(a\inv)\phi(a)\phi(a\inv)= \phi(a\inv)$.
Thus $\img(\phi)$ is regular. If $\phi(a)\in \img(\phi)$ is an idempotent, set $h = aa^{-2}a$. Then $hh=aa^{-2}a^2a^{-2}a = aa^{-2}a=h$ and $\phi(h) = \phi(a)\phi(a^{-2})\phi(a) = \phi(a)^2 \phi(a^{-2})\phi(a)^2 = \phi(a^2 a^{-2} a^2) = \phi(a^2) = \phi(a)^2 = \phi(a)$. Thus every idempotent in $\img(\phi)$ is the image of an idempotent in $S$. For $e,f\in E(S)$, $\phi(e)\phi(f) = \phi(ef) = \phi(fe) = \phi(f)\phi(e)$, which shows that idempotents in $\img(\phi)$ commute. Therefore $\img(\phi)$ is an inverse semigroup. Since $\phi(a\inv)$ is an inverse of $\phi(a)$, it must be the unique inverse, that is, $\phi(a)\inv = \phi(a\inv)$ for all $a\in S$. For the remaining assertion, it is enough to show that $(a,b)\in\ker(\phi)$ implies $(a\inv,b\inv)\in \ker(\phi)$. This follows from $\phi(a\inv)=\phi(a)\inv = \phi(b)\inv = \phi(b\inv)$.
\end{proof}

An inverse subsemigroup $N$ of an inverse semigroup $S$ is said to be \emph{normal} if it is stable under conjugation ($g\inv Ng \subseteq N$ for all $g\in S$) and full ($E(S)\subseteq N$).

Congruences of an inverse semigroup $S$ are in one-to-one correspondence with \emph{congruence pairs} $(N,\varepsilon)$ where $N$ is a normal inverse subsemigroup of $S$, $\varepsilon$ is a congruence of $E(S)$ and
\begin{itemize}
	\item[(CP1)] for every $a\in S$ and $e\in E(S)$, if $ae\in N$ and $e\,\varepsilon\,a\inv a$ then $a\in N$;
	\item[(CP2)] for every $a\in N$ and $e\in E(S)$, $a\inv ea\,\varepsilon\,a\inv ae$.
\end{itemize}
For a congruence $\alpha$ of $S$, the corresponding pair is $(N_{\alpha},\tr(\alpha))$ where
$N_\alpha$ is the \emph{kernel} of $\alpha$, defined as the union of all those $\alpha$-classes which contain idempotents;
and $\tr(\alpha)$ is the \emph{trace} of $\alpha$, defined as $\alpha|_{E(S)}$, the restriction of $\alpha$ to $E(S)$.

In the other direction, for a congruence pair $(N,\varepsilon)$, the corresponding congruence $\alpha$ is given by $a\,\alpha\,b$ if and only if
$ab\inv \in N$ and $aa\inv\,\varepsilon\,bb\inv$.

The smallest and the largest congruences $0_S=\Id S$, $1_S=S\times S$ correspond to congruence pairs $(E(S),0_{E(S)})$ and $(S,1_{E(S)})$, respectively.
In case $\tr(\alpha) = 0_{E(S)}$, $\alpha$ is said to be \emph{idempotent-separating}.

\begin{lem}[\cite{Mitsch}, Prop.~2]\label{Lem:Mitsch}
Let $N$ be a normal inverse subsemigroup of an inverse semigroup $S$. Then $(N,0_{E(S)})$ is a congruence pair if and only if every element of $N$ commutes with every element of $E(S)$.
\end{lem}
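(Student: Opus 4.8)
The plan is to unwind the two congruence-pair conditions (CP1) and (CP2) in the special case $\varepsilon = 0_{E(S)}$, and to observe that only (CP2) carries any content. First I would note that when $e\,\varepsilon\,a\inv a$ with $\varepsilon = 0_{E(S)}$, this means $e = a\inv a$, so $ae = aa\inv a = a$; hence the hypothesis ``$ae \in N$'' of (CP1) is literally ``$a \in N$'', and (CP1) holds automatically. Thus $(N,0_{E(S)})$ is a congruence pair precisely when (CP2) holds, which here reads
\[
a\inv e a = a\inv a e \qquad \text{for all } a \in N,\ e \in E(S).
\]

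For the ``if'' direction I would argue as follows. Assume every element of $N$ commutes with every element of $E(S)$. Given $a \in N$ and $e \in E(S)$, we have $a\inv \in N$ because $N$ is an inverse subsemigroup, so $a\inv e = e a\inv$. Then $a\inv e a = e a\inv a = a\inv a e$, the last step because $e$ and $a\inv a$ are commuting idempotents; this is exactly (CP2).

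For the ``only if'' direction, assume (CP2) and fix $n \in N$, $f \in E(S)$. Applying (CP2) with $a = n$ and $e = f$ gives $n\inv f n = n\inv n f$; left-multiplying by $n$ and using $n n\inv n = n$ yields $n f = n n\inv f n$. Since $n n\inv$ and $f$ are commuting idempotents, $n n\inv f n = f n n\inv n = f n$, so $n f = f n$, as required.

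The whole argument is short, and I expect the only real point of insight to be recognizing that (CP1) is vacuous for idempotent-separating congruences; once that is seen, both implications are one-line manipulations with commuting idempotents. (It is worth remarking that the ``only if'' direction uses only $n \in N$, whereas the ``if'' direction genuinely uses $a\inv \in N$.)
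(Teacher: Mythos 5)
Your proof is correct and takes essentially the same route as the paper's: recognize that (CP1) is vacuous when the trace is $0_{E(S)}$, reduce the statement to (CP2), i.e.\ $a\inv ea=a\inv ae$, and pass between this and commutation with idempotents by left-multiplying by $a$ and using that idempotents commute. The only difference is that you justify explicitly why (CP1) is automatic, which the paper merely asserts.
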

\begin{proof}
For idempotent-separating congruences, condition (CP1) is always satisfied, and condition (CP2) says that $a\inv ea=a\inv ae$ for every $e\in E(S)$ and $a\in N$. This is certainly true if every element of $N$ commutes with every $e\in E(S)$. Conversely, multiplying by $a$ on the left and using commutativity of $E(S)$, we obtain $ae=a\underline{a\inv ae}=aa\inv ea=eaa\inv a=ea$.
\end{proof}

\begin{lem}\label{Lem:below H iff tr=0}
Let $\alpha$ be a congruence on an inverse semigroup $S$. Then $\alpha$ is idempotent-separating if and only if $\alpha\subseteq\greenH$.
\end{lem}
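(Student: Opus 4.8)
The plan is to prove both implications directly, relying on the equational descriptions of Green's relations in an inverse semigroup ($a\,\greenL\,b$ iff $a\inv a = b\inv b$, and $a\,\greenR\,b$ iff $aa\inv = bb\inv$) together with the fact, guaranteed by Lemma~\ref{Lem:homom_images}, that a congruence of an inverse semigroup is closed under the unary operation $x \mapsto x\inv$.

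For the forward direction, I would assume $\alpha$ is idempotent-separating and take an arbitrary pair $(a,b) \in \alpha$. Since $\alpha$ is closed under inversion, $(a\inv,b\inv) \in \alpha$, and then multiplicativity gives $(a\inv a, b\inv b) \in \alpha$ and $(aa\inv, bb\inv) \in \alpha$. Each of these pairs consists of idempotents, so idempotent-separation forces $a\inv a = b\inv b$ and $aa\inv = bb\inv$; by the equational characterization this is precisely $a\,\greenL\,b$ and $a\,\greenR\,b$, i.e. $(a,b) \in \greenH$.

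For the converse, assume $\alpha \subseteq \greenH$ and let $e,f \in E(S)$ with $e\,\alpha\,f$. Then $e\,\greenL\,f$, so $e\inv e = f\inv f$; since an idempotent is its own inverse we have $e\inv e = e$ and $f\inv f = f$, whence $e = f$. Thus $\tr(\alpha) = \alpha|_{E(S)} = 0_{E(S)}$, i.e. $\alpha$ is idempotent-separating.

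The argument is essentially routine, so there is no real obstacle; the only point that needs attention is that one must use closure of $\alpha$ under inversion (so that $(a\inv a, b\inv b)$ and $(aa\inv,bb\inv)$ are again $\alpha$-related), rather than treating $\alpha$ merely as a semigroup congruence — and this is exactly what Lemma~\ref{Lem:homom_images} provides.
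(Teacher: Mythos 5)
Your proof is correct and follows essentially the same route as the paper: one direction applies closure of $\alpha$ under inversion and multiplication to the pairs $(a\inv a, b\inv b)$ and $(aa\inv, bb\inv)$ together with the equational description of $\greenL$ and $\greenR$, and the other direction observes that $\greenH$ restricted to $E(S)$ is trivial. No gaps.
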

\begin{proof}
($\Leftarrow$) If $a\,\alpha\,b$, then $aa\inv \,\alpha\,bb\inv$ and $a\inv a\,\alpha\,b\inv b$. Since $\alpha$ is idempotent-separating,
$aa\inv = bb\inv$ and $a\inv a = b\inv b$, which is equivalent to $a\,\greenH\,b$.
($\Rightarrow$) The restriction of $\greenH$ to $E(S)$ is $0_{E(S)}$.
\end{proof}

\section{Partial automorphisms of normal inverse subsemigroups}\label{sec:inner_aut}

Let $S$ be an inverse semigroup. A partial bijection $\phi : X\to Y$, $X,Y\subseteq S$, is said to be a
\emph{partial automorphism} if $X$ and $Y$ are inverse subsemigroups of $S$ and $\phi$ is an isomorphism from $X$ onto $Y$.
The set $\PAut(S)$ of all partial automorphisms of $S$ is an inverse submonoid of $\IS(S)$, which we call
the \emph{partial automorphism monoid} of $S$.

Throughout this subsection, let $N$ be a normal inverse subsemigroup of an inverse semigroup $S$. Let $T_N$ denote the monoid of all (total) transformations of $N$. For each $g\in S$, define $\psi_g : N\to N$ by $\psi_g(a) = gag\inv$ for all $a\in N$. Further, we define
\begin{equation}\label{Eqn:Psi}
\Psi_N : S\to T_N;\quad g\mapsto \psi_g\,. \tag{$\Psi_N$}
\end{equation}
Because of the identity $(xy)\inv = y\inv x\inv$ and Lemma \ref{Lem:homom_images}, the following holds.

\begin{lem}\label{Lem:psi}
    For each $g\in S$, $\Psi_N : S\to T_N$ is a homomorphism and $\ker(\Psi_N)$ is a congruence.
\end{lem}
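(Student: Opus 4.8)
The plan is to verify three things in turn: that each $\psi_g$ really maps $N$ into $N$, that $\Psi_N$ respects multiplication, and that the kernel assertion is then immediate from Lemma \ref{Lem:homom_images}.

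First I would check well-definedness, which is where the normality hypothesis enters. For $a\in N$ and $g\in S$, the element $\psi_g(a) = gag\inv$ lies in $N$: the defining condition $h\inv Nh\subseteq N$ for all $h\in S$, applied with $h=g\inv$ and using $(g\inv)\inv = g$, yields $gNg\inv\subseteq N$. Hence $\psi_g\in T_N$ for every $g$, so $\Psi_N$ is at least a well-defined function $S\to T_N$.

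Next, the homomorphism property. Fixing the convention that the product $ff'$ in $T_N$ denotes the composite $x\mapsto f(f'(x))$ (the opposite convention works equally well, as long as it is used consistently), for any $g,h\in S$ and $a\in N$ one computes $\psi_{gh}(a) = (gh)a(gh)\inv = g(hah\inv)g\inv = \psi_g(\psi_h(a))$, where the middle equality uses the identity $(gh)\inv = h\inv g\inv$. Thus $\psi_{gh} = \psi_g\psi_h$, so $\Psi_N$ is a semigroup homomorphism.

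Finally, since $S$ is an inverse semigroup and $\Psi_N$ is a semigroup homomorphism into the semigroup $T_N$, Lemma \ref{Lem:homom_images} applies verbatim and gives that $\ker(\Psi_N)$ is a congruence (and, as a bonus, that $\Psi_N$ preserves inverses and $\img(\Psi_N)$ is an inverse subsemigroup of $T_N$). I do not expect any real obstacle: the only points needing a moment's care are rewriting the normality condition in the form $gNg\inv\subseteq N$ for well-definedness and keeping the composition convention in $T_N$ consistent with the order of factors in $(gh)\inv = h\inv g\inv$.
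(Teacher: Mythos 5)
Your proof is correct and follows the same route the paper intends: the paper's own justification is the one-line remark that the lemma follows from the identity $(xy)\inv = y\inv x\inv$ together with Lemma \ref{Lem:homom_images}, and your argument simply fills in those details (well-definedness from normality, the computation $\psi_{gh}=\psi_g\psi_h$, and the appeal to Lemma \ref{Lem:homom_images} for the kernel). No issues.
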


\begin{lem}\label{Lem:domain_char}
    For all $g\in S$, $gNg\inv = gg\inv N gg\inv$ is an inverse submonoid of $N$.
\end{lem}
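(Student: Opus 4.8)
The plan is to prove the displayed set equality first, and then to check the three ingredients of the ``inverse submonoid'' assertion (closure under product, closure under inversion, existence of a two-sided identity), using only the idempotent identities $gg\inv g = g$, $g\inv g g\inv = g\inv$ together with the two defining properties of a normal inverse subsemigroup (stability under conjugation and fullness).

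For the equality $gNg\inv = gg\inv N gg\inv$: given $a \in N$, first note that $gag\inv \in N$ --- this uses stability under conjugation applied with $g\inv$ in place of $g$, since $(g\inv)\inv = g$ --- and then $gg\inv \cdot gag\inv \cdot gg\inv = (gg\inv g)\,a\,(g\inv g g\inv) = gag\inv$, so $gag\inv \in gg\inv N gg\inv$; this argument also records the inclusion $gNg\inv \subseteq N$ that is needed at the end. Conversely, given $b \in N$, stability under conjugation gives $g\inv b g \in N$, and the computation $g(g\inv b g)g\inv = gg\inv b gg\inv$ exhibits $gg\inv b gg\inv$ as a member of $gNg\inv$.

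For the submonoid claim, closure under inversion is immediate from $(gag\inv)\inv = g a\inv g\inv$ with $a\inv \in N$; closure under products follows from $gag\inv \cdot gbg\inv = g\bigl(a (g\inv g) b\bigr) g\inv$ together with $g\inv g \in E(S) \subseteq N$ (here fullness of $N$ is used) and the fact that $N$ is a subsemigroup, so $a(g\inv g)b \in N$. Finally $gg\inv = g(g\inv g)g\inv$ lies in $gNg\inv$ by fullness, and the identities give $gg\inv \cdot gag\inv = gag\inv$ and $gag\inv \cdot gg\inv = ga(g\inv g g\inv) = gag\inv$, so $gg\inv$ is a two-sided identity on $gNg\inv$. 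All of these verifications are one-line computations; the only points requiring attention are that the definition of normality yields conjugation stability on both sides ($g\inv N g \subseteq N$ and $gNg\inv \subseteq N$ for every $g$) and that fullness is exactly what places the idempotents $g\inv g$ and $gg\inv$ inside $N$. I do not expect any genuine obstacle here.
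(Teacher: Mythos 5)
Your proof is correct and follows essentially the same route as the paper's: both inclusions of the set equality come from $g=gg\inv g$, $g\inv=g\inv gg\inv$ together with conjugation-stability of $N$ on both sides, and the submonoid verifications are the same one-line computations the paper leaves implicit. No issues.
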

\begin{proof}
  We have $gNg\inv = gg\inv gNg\inv gg\inv \subseteq gg\inv N gg\inv\subseteq gNg\inv$, using normality of $N$. This
  establishes the desired equality. It is immediate that $gNg\inv$ is a subsemigroup of $N$, $gg\inv$ is its unit element, and since
  $(gag\inv)\inv = ga\inv g\inv$, $gNg\inv$ is also closed under inversion.
\end{proof}

\begin{lem}\label{Lem:H_char}
    For all $g,h\in S$,
    \begin{enumerate}
      \item $g\greenR h$ if and only if $gNg\inv = hNh\inv$,
      \item $g\greenL h$ if and only if $g\inv Ng = h\inv Nh$.
    \end{enumerate}
\end{lem}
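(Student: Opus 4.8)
The plan is to prove statement (1); statement (2) then follows by the standard left-right duality for inverse semigroups (replacing each product by its reverse and using $(xy)\inv = y\inv x\inv$), so I would only remark on that at the end. Recall from the basic facts that in an inverse semigroup $g \greenR h$ if and only if $gg\inv = hh\inv$, so the real content is to show $gg\inv = hh\inv \iff gNg\inv = hNh\inv$.

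For the forward direction, assume $gg\inv = hh\inv$. By Lemma \ref{Lem:domain_char} we have $gNg\inv = gg\inv N gg\inv$ and $hNh\inv = hh\inv N hh\inv$; since $gg\inv = hh\inv$ these two sets are literally the same. So this direction is essentially immediate once Lemma \ref{Lem:domain_char} is in hand — the only thing to write is the substitution.

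For the converse, assume $gNg\inv = hNh\inv$. The idempotent $gg\inv$ is the identity element of the monoid $gNg\inv$ (Lemma \ref{Lem:domain_char} again), and likewise $hh\inv$ is the identity element of $hNh\inv$. A monoid has a unique identity element, so from the equality of the two submonoids we get $gg\inv = hh\inv$, i.e. $g \greenR h$. The one point worth checking carefully is that $gg\inv$ really is the unit: it lies in $gNg\inv$ because $E(S) \subseteq N$ by fullness of $N$ and $g(gg\inv)g\inv = (gg\inv)(gg\inv) = gg\inv$ — wait, more directly $gg\inv = g \cdot g\inv g \cdot g\inv$ with $g\inv g \in E(S) \subseteq N$ — and it acts as a two-sided identity on any $gag\inv$ since $(gg\inv)(gag\inv) = g(g\inv g a)g\inv = g(a g\inv g)g\inv$ and $g\inv g$, $a a\inv \in E(S)$ commute appropriately; but all of this is already packaged in Lemma \ref{Lem:domain_char}, so I would simply cite it.

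I do not anticipate any real obstacle here: the lemma is a short consequence of Lemma \ref{Lem:domain_char} together with the equational description of $\greenR$ in inverse semigroups. The only mild subtlety is making sure the duality argument for (2) is invoked correctly — one should note that $N$ normal implies the dual statements hold for $N$ as well (normality and fullness are self-dual conditions), so Lemma \ref{Lem:domain_char} applies verbatim with $g\inv N g = g\inv g N g\inv g$ and unit $g\inv g$, giving (2) by the same two-line argument.
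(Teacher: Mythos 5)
Your proposal is correct and follows essentially the same route as the paper: the forward direction is the same substitution via Lemma \ref{Lem:domain_char}, and part (2) is obtained by reducing to part (1) applied to $g\inv$ and $h\inv$. Your converse argument --- that $gg\inv$ and $hh\inv$ are each the two-sided identity of the common monoid $gNg\inv = hNh\inv$ and a monoid's identity is unique --- is a slightly cleaner packaging of the paper's explicit computation (writing $gg\inv = hbh\inv$ with $b\in N$, deducing $hh\inv\cdot gg\inv = gg\inv$ and symmetrically, then using commutativity of idempotents), but it rests on exactly the same facts and is complete as stated.
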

\begin{proof}
  (1) If $g\greenR h$, then $gg\inv = hh\inv$ and so we have $gNg\inv = gg\inv Ngg\inv = hh\inv Nhh\inv = hNh\inv$, using Lemma \ref{Lem:domain_char}. Conversely, if $gNg\inv = hNh\inv$, then $gg\inv = g\cdot g\inv g\cdot g\inv = hbh\inv$ for some $b\in N$ since $E(S)\subseteq N$. Thus $hh\inv\cdot gg\inv = hh\inv\cdot hbh\inv = hbh\inv = gg\inv$. Reversing the roles of $g$ and $h$, we obtain $gg\inv\cdot hh\inv = hh\inv$. Since idempotents commute, $gg\inv = hh\inv$, that is, $g\greenR h$.

  (2) We have $g\greenL h$ if and only if $g\inv \greenR h\inv$ if and only if $g\inv Ng = h\inv Nh$ by part (1).
\end{proof}

For each $g\in S$, we define partial mappings $\phi_g : g\inv Ng\to gNg\inv\,;\ x\mapsto gxg\inv$.

\begin{lem}\label{lem:partial_aut}
  For each $g\in S$, $\phi_g$ is a partial automorphism of $N$, specifically, it is an isomorphism of $g\inv Ng$ onto $gNg\inv$.
\end{lem}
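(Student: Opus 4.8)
The plan is to verify directly that $\phi_g$ is a well-defined bijection onto its stated codomain and that it respects both operations. First I would observe that by Lemma~\ref{Lem:domain_char} applied to $g$ and (dually) to $g\inv$, both $g\inv Ng$ and $gNg\inv$ are inverse submonoids of $N$, so the claimed domain and codomain make sense. For well-definedness of $\phi_g$ as a map into $gNg\inv$: given $x\in g\inv Ng$, write $x = g\inv a g$ with $a\in N$; then $gxg\inv = gg\inv a gg\inv \in gNg\inv$ by the equality in Lemma~\ref{Lem:domain_char}. So $\phi_g$ maps $g\inv Ng$ into $gNg\inv$.

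Next I would show $\phi_g$ is a homomorphism. Multiplicativity is immediate from $g\inv g$ being an identity on the relevant submonoid: for $x,y\in g\inv Ng$ we have $\phi_g(x)\phi_g(y) = gxg\inv gyg\inv = gx(g\inv g)yg\inv$, and since $x = g\inv g \cdot x$ (as $g\inv g$ is the unit of $g\inv Ng$ by the dual of Lemma~\ref{Lem:domain_char}), this equals $gxyg\inv = \phi_g(xy)$. Compatibility with inversion is the identity $(gxg\inv)\inv = gx\inv g\inv$ already noted in the proof of Lemma~\ref{Lem:domain_char}; one should also check $x\in g\inv Ng \Rightarrow x\inv \in g\inv Ng$, which again follows since that set is an inverse subsemigroup. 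Hence $\phi_g$ is a homomorphism of inverse semigroups from $g\inv Ng$ to $gNg\inv$.

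For bijectivity, the natural candidate inverse is $\phi_{g\inv} : gNg\inv \to g\inv Ng$, $y\mapsto g\inv y g$. I would check that $\phi_{g\inv}\circ\phi_g = \Id{g\inv Ng}$ and $\phi_g\circ\phi_{g\inv} = \Id{gNg\inv}$: for $x\in g\inv Ng$, $\phi_{g\inv}(\phi_g(x)) = g\inv g x g\inv g = x$ using that $g\inv g$ is the identity of $g\inv Ng$, and symmetrically on the other side using that $gg\inv$ is the identity of $gNg\inv$. Thus $\phi_g$ is an isomorphism of $g\inv Ng$ onto $gNg\inv$, and since both are inverse subsemigroups of $N$, it is by definition a partial automorphism of $N$.

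The only mild subtlety — the ``main obstacle,'' such as it is — is keeping straight which idempotent ($gg\inv$ versus $g\inv g$) acts as the unit on which submonoid, and repeatedly invoking the equalities $g\inv Ng = g\inv g\,N\,g\inv g$ and $gNg\inv = gg\inv N gg\inv$ from Lemma~\ref{Lem:domain_char} to absorb these idempotents at the right moments; once that bookkeeping is in place, every step is a one-line computation.
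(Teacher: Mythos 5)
Your proof is correct and follows essentially the same route as the paper: exhibit $\phi_{g\inv}$ as the two-sided inverse bijection and verify multiplicativity by absorbing the middle idempotent $g\inv g$ using Lemma~\ref{Lem:domain_char}. (One trivial bookkeeping slip: in the multiplicativity step you cite $x = g\inv g\cdot x$ where the computation actually uses $x\cdot g\inv g = x$, but since $g\inv g$ is a two-sided unit of $g\inv Ng$ this is harmless.)
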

\begin{proof}
For all $a\in N$, $\phi_{g\inv}\phi_g(g\inv ag) = g\inv gg\inv agg\inv g = g\inv ag$ and similarly $\phi_g\phi_{g\inv}(gag\inv) = gag\inv$. Thus $\phi_g$ is a bijection from $g\inv Ng$ to $gNg\inv$ with inverse bijection $\phi_{g\inv}$. For all $a,b\in S$, we have
\begin{align*}
\phi_g(g\inv ag\cdot g\inv bg) &= gg\inv a \underline{gg\inv} bgg\inv \\
& = gg\inv agg\inv \cdot gg\inv bgg\inv
= \phi_g(g\inv ag)\phi_g(g\inv bg)\,,
\end{align*}
hence $\phi_g$ is a partial automorphism.
\end{proof}

We define
\begin{equation}\label{Eqn:Phi}
\Phi_N : S\to \PAut(N);\quad g\mapsto \phi_g\,. \tag{$\Phi_N$}
\end{equation}

\begin{lem}\label{Lem:compose}
$\Phi : S\to \PAut(N)$ is a homomorphism and $\ker(\Phi_N)$ is a congruence.
\end{lem}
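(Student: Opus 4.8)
The plan is to reduce the lemma to a single composition identity: for all $g,h\in S$,
\[
\phi_g\phi_h=\phi_{gh},
\]
where the product is composition in $\PAut(N)\subseteq\IS(N)$, read as in the proof of Lemma~\ref{lem:partial_aut} (the rightmost map acting first). Granting this identity, $\Phi_N$ is a semigroup homomorphism from the inverse semigroup $S$ into the inverse semigroup $\PAut(N)$, so Lemma~\ref{Lem:homom_images} immediately gives that $\ker(\Phi_N)$ is a congruence. Thus all the content lies in proving $\phi_g\phi_h=\phi_{gh}$.

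The values cause no trouble: whenever both sides are defined at $x$ we have $(\phi_g\phi_h)(x)=g(hxh\inv)g\inv=(gh)\,x\,(gh)\inv=\phi_{gh}(x)$. So the identity amounts to the equality of the two domains
\[
\dom(\phi_g\phi_h)=\{x\in h\inv Nh\mid hxh\inv\in g\inv Ng\},\qquad \dom(\phi_{gh})=(gh)\inv N(gh),
\]
which I would prove by two inclusions. For $\dom(\phi_g\phi_h)\subseteq\dom(\phi_{gh})$: if $x\in h\inv Nh$ and $hxh\inv=g\inv ng$ with $n\in N$, then $x=(h\inv h)\,x\,(h\inv h)=h\inv(hxh\inv)h=(gh)\inv n\,(gh)$, where the first equality uses $h\inv Nh=(h\inv h)N(h\inv h)$, i.e. Lemma~\ref{Lem:domain_char} applied to $h\inv$.

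The reverse inclusion is the step I expect to be the main obstacle, since it is where both normality of $N$ and the semilattice structure of $E(S)$ enter. The plan here: given $x=(gh)\inv n\,(gh)=h\inv(g\inv ng)h$ with $n\in N$, normality gives $g\inv ng\in N$, so $x\in h\inv Nh$; and writing $e:=hh\inv$, $f:=g\inv g$ one computes $hxh\inv=e\,(g\inv ng)\,e$, which, using $g\inv ng=f\,(g\inv ng)\,f$ (Lemma~\ref{Lem:domain_char} again) together with $ef=fe$, rearranges to $f\bigl(e\,(g\inv ng)\,e\bigr)f\in fNf=g\inv Ng$, since $e\,(g\inv ng)\,e\in N$. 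Hence $x\in\dom(\phi_g\phi_h)$, the two domains agree, and $\phi_g\phi_h=\phi_{gh}$ follows. The congruence statement is then immediate from Lemma~\ref{Lem:homom_images} as noted. (Everything except the rearrangement in the last paragraph is bookkeeping; that is the only place requiring care.)
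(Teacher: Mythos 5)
Your proposal is correct and follows essentially the same route as the paper: reduce everything to $\phi_g\phi_h=\phi_{gh}$, note the values trivially agree, and establish the equality of domains by two inclusions, with the harder inclusion handled by the identical idempotent-commuting rearrangement (your $efnf e = f(ene)f\in g\inv Ng$ is precisely the paper's underlined computation). The explicit appeal to Lemma~\ref{Lem:homom_images} for the congruence claim is also the intended justification.
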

\begin{proof}
We first show that the domains of $\phi_g\phi_h$ and $\phi_{gh}$ coincide. The domain of $\phi_{g}\phi_{h}$ is
\[
X = h\inv Nh \cap \{ a\in N\mid hah\inv \in g\inv Ng \}
\]
and the domain of $\phi_{gh}$ is $Y = (gh)\inv Ngh$.

If $h\inv ah\in X$, then
\[
h\inv ah = h\inv (hh\inv ahh\inv)h\in h\inv (g\inv Ng)h = (gh)\inv Ngh = Y\,;
\]
thus $X\subseteq Y$.
For the other inclusion, $Y=(gh)\inv Ngh \subseteq h\inv Nh$, and for all $x\in N$, we have
\begin{align*}
h((gh)\inv xgh)h\inv &= \underline{hh\inv}\underline{g\inv g}g\inv xg\underline{g\inv g}\underline{hh\inv} \\
&= g\inv g hh\inv g\inv xghh\inv g\inv g\in g\inv Ng
\end{align*}
since idempotents commute. Thus $Y\subseteq X$.

Finally, it is clear that $\phi_g\phi_h(a) = \phi_{g h}(a)$ for all $a\in (gh)\inv Ngh$.
\end{proof}

\begin{thm}\label{Thm:kernels}
Let $S$ be an inverse semigroup, let $N$ be a normal inverse subsemigroup of $S$, and let $\Psi_N : S\to T_N$ and $\Phi_N : S\to \PAut(N)$ be defined by \eqref{Eqn:Psi} and \eqref{Eqn:Phi}. Then $\ker(\Psi_N) = \ker(\Phi_N)$. This congruence $\zeta_N$ is idempotent-separating, and the corresponding congruence pair is $(Z_S(N),0_{E(S)})$ where
\[
Z_S(N) = \{ g\in S\mid gg\inv ag = gag\inv g\,, \forall a\in N\}\,.
\]
\end{thm}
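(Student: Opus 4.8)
The plan is to establish the three claims in sequence: first $\ker(\Psi_N)=\ker(\Phi_N)$, then idempotent-separation, then the identification of the kernel (in the congruence-pair sense) with $Z_S(N)$ and of the trace with $0_{E(S)}$. For the equality of the two congruences, I would unwind both definitions into equational conditions. We have $(g,h)\in\ker(\Psi_N)$ iff $gag\inv = hah\inv$ for all $a\in N$. For $\ker(\Phi_N)$, since $\phi_g$ and $\phi_h$ are partial maps, $(g,h)\in\ker(\Phi_N)$ means that $\phi_g=\phi_h$ as partial bijections, i.e., they have the same domain \emph{and} agree on it. By Lemma \ref{Lem:H_char}(2), the domains $g\inv Ng$ and $h\inv Nh$ coincide iff $g\greenL h$, i.e., iff $g\inv g=h\inv h$. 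So the first task is to show $\ker(\Psi_N)\subseteq\greenL$ and to see that, given $g\inv g = h\inv h$, the conditions ``$gag\inv=hah\inv$ for all $a\in N$'' and ``$\phi_g,\phi_h$ agree on $g\inv Ng$'' are the same. The latter is essentially a change of variables: every element of $g\inv Ng = g\inv g \cdot N \cdot g\inv g$ has the form $g\inv a g$ for $a\in N$ (use $a\mapsto gg\inv a gg\inv\in N$ by normality and Lemma \ref{Lem:domain_char}), and $\phi_g(g\inv a g) = g g\inv a g g\inv$, which ranges over $gNg\inv$ as $a$ ranges over $N$. So agreement of the partial maps translates directly to $gg\inv a gg\inv = hh\inv a hh\inv$ for all $a\in N$; combined with $gg\inv=hh\inv$ (which I expect to extract along the way) this becomes the $\Psi_N$ condition. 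To show $\ker(\Psi_N)\subseteq\greenH$ (hence $\subseteq\greenL$ and $\subseteq\greenR$): if $gag\inv=hah\inv$ for all $a\in N$, take $a=gg\inv\in N$ to get $gg\inv = hgg\inv h\inv$, so $hh\inv\cdot gg\inv = hh\inv$; symmetrically $gg\inv\cdot hh\inv = hh\inv$, and since idempotents commute, $gg\inv=hh\inv$. For $g\inv g=h\inv h$ one argues dually, or observes that $\ker(\Psi_N)$ is a congruence (Lemma \ref{Lem:psi}) closed under inversion, so $(g\inv,h\inv)\in\ker(\Psi_N)$ gives $g\inv g = h\inv h$ from the previous computation applied to $g\inv,h\inv$. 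This simultaneously proves $\ker(\Psi_N)\subseteq\greenH$, which by Lemma \ref{Lem:below H iff tr=0} gives idempotent-separation; so the second claim is essentially free once the first is done.

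For the congruence pair: write $\zeta_N$ for this common congruence. Its trace is $0_{E(S)}$ by idempotent-separation. Its kernel $N_{\zeta_N}$ is the union of $\zeta_N$-classes containing an idempotent; since $\zeta_N$ is idempotent-separating, each such class contains exactly one idempotent, and $g\in N_{\zeta_N}$ iff $g\ \zeta_N\ e$ for some $e\in E(S)$, which (by $\greenH$-containment) forces $e = gg\inv = g\inv g$, so $g$ lies in a subgroup of $S$ with identity $gg\inv$. Thus $g\in N_{\zeta_N}$ iff $(g, gg\inv)\in\zeta_N$ iff $\psi_g=\psi_{gg\inv}$, i.e., $gag\inv = gg\inv\cdot a\cdot gg\inv$ for all $a\in N$. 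The right-hand side is $gg\inv a gg\inv$. Now I need to massage $gag\inv = gg\inv a gg\inv$ into the stated form $gg\inv ag = gag\inv g$. Multiplying $gag\inv = gg\inv a gg\inv$ on the right by $g$ gives $gag\inv g = gg\inv a gg\inv g = gg\inv ag$ (using $gg\inv g = g$ and commuting the idempotent $gg\inv$ past... wait, $a\in N$ need not be idempotent, so one writes $gg\inv a gg\inv g = gg\inv a g$ directly since $g\inv g$ absorbs: $gg\inv a (gg\inv g) = gg\inv ag$). Conversely, if $gg\inv ag = gag\inv g$ for all $a\in N$, multiply on the right by $g\inv$: $gg\inv a gg\inv = gag\inv gg\inv$; and since $g\greenR$ itself, $gg\inv gg\inv = gg\inv$, but I need $gag\inv gg\inv = gag\inv$ — this holds because $gag\inv\in gNg\inv$ which by Lemma \ref{Lem:domain_char} equals $gg\inv N gg\inv$, so $gag\inv$ is fixed by right multiplication by $gg\inv$. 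Hence $gg\inv a gg\inv = gag\inv$, recovering the $\psi_g=\psi_{gg\inv}$ condition. So $N_{\zeta_N} = Z_S(N)$, and $(Z_S(N),0_{E(S)})$ is the congruence pair.

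The main obstacle I anticipate is bookkeeping around the partial nature of $\phi_g$: being careful that equality of partial bijections requires equality of domains, correctly identifying those domains via Lemma \ref{Lem:H_char}, and handling the substitution $x\leftrightarrow g\inv g\cdot a\cdot g\inv g$ that lets one move freely between ``for all $x$ in the domain'' and ``for all $a\in N$''. Everything else is routine idempotent-commuting manipulation of the kind already appearing in Lemmas \ref{Lem:domain_char}--\ref{Lem:compose}. One clean way to organize the whole argument and sidestep some of this is to prove directly that, for a congruence $\alpha$ contained in $\greenH$, one has $\alpha\subseteq\ker(\Psi_N)$ iff $\alpha\subseteq\ker(\Phi_N)$ by comparing the respective kernels (in the pair sense) element-by-element as above — but the direct route sketched here should suffice.
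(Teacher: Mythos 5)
Your overall architecture matches the paper's: reduce to $\greenH$ via Lemma \ref{Lem:H_char}, translate between the total maps $\psi_g$ and the partial maps $\phi_g$ by a change of variables, obtain idempotent-separation from $\zeta_N\subseteq\greenH$, and identify the kernel of $\zeta_N$ with $Z_S(N)$ by analyzing when $g\,\zeta_N\,gg\inv$. Your handling of the congruence pair is in fact organized a bit more cleanly than the paper's: you compute $N_{\zeta_N}$ and $\tr(\zeta_N)$ directly and invoke the correspondence theorem, whereas the paper proves two containments between $\zeta_N$ and the congruence generated by $(Z_S(N),0_{E(S)})$; that part of your argument, including the passage between $gag\inv=gg\inv agg\inv$ and $gg\inv ag=gag\inv g$, is correct.

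The crux, however --- the equivalence of the two kernels --- contains two computations that fail as written. First, $\psi_g(gg\inv)=g\cdot gg\inv\cdot g\inv=g^2g^{-2}$, which is not $gg\inv$ in general (take $g=q$ in the bicyclic monoid: $q^2p^2\neq qp$); the substitution you want is $a=g\inv g$, for which $g\cdot g\inv g\cdot g\inv=gg\inv$ --- exactly the trick used in the proof of Lemma \ref{Lem:H_char}. Second, and more seriously, your claimed translation ``agreement of $\phi_g$ and $\phi_h$ on the common domain means $gg\inv agg\inv=hh\inv ahh\inv$ for all $a\in N$'' cannot be right: once $gg\inv=hh\inv$ is in hand, that equation is a tautology, so your reduction would yield $\greenH\subseteq\ker(\Phi_N)$, which is false (for $N=S$ a nonabelian group, $\greenH=S\times S$ while $\ker(\Phi_S)$ is the center congruence). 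The slip is that you evaluated $\phi_g$ at $g\inv ag$ but $\phi_h$ at $h\inv ah$, which are different points of the common domain. The correct change of variables is the one you name only in your closing paragraph: evaluate both maps at $x=g\inv g\,a\,g\inv g=h\inv h\,a\,h\inv h$ (legitimate once $g\inv g=h\inv h$, and $x\in g\inv Ng$ by normality), which gives $\phi_g(x)=gag\inv$ and $\phi_h(x)=hah\inv$, so agreement at $x$ is literally the $\Psi_N$-condition at $a$. With those two repairs the proof goes through and is essentially the paper's.
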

\begin{proof}
Assume $(g,h)\in \ker(\Psi_N)$. Since $(g\inv,h\inv)\in \ker(\Psi_N)$, we have $\psi_{g\inv} = \psi_{h\inv}$. In particular, $g\inv Ng = \psi_{g\inv}(N) = \psi_{h\inv}(N) = h\inv Nh$. Thus $\phi_g$ and $\phi_h$ have the same domains. Clearly $\phi_g(x) = \phi_h(x)$ for all $x\in g\inv Ng$, and so $(g,h)\in \ker(\Phi_N)$.

Conversely, assume $(g,h)\in \ker(\Phi_N)$. Since the domains and images of $\phi_g$ and $\phi_h$ coincide, $g\inv Ng = h\inv Nh$ and $gNg\inv = hNh\inv$. By Lemma \ref{Lem:H_char}, $g\greenH h$. For $a\in N$, we have
\begin{align*}
h\inv ah &= h\inv \underline{hh\inv} a\underline{hh\inv} h = h\inv gg\inv agg\inv h = h\inv \phi_g(g\inv ag) h \\
&= h\inv \phi_h(g\inv ag) h = \underline{h\inv h}(g\inv ag)\underline{h\inv h} = g\inv gg\inv agg\inv g = g\inv ag\,.
\end{align*}
Thus $(g\inv,h\inv)\in \ker(\Psi_N)$, and therefore $(g,h)\in \ker(\Psi_N)$.

The preceding paragraph showed $\zeta_N\subseteq \greenH$, hence $\zeta_N$ is idempotent-separating by Lemma \ref{Lem:below H iff tr=0}.

Next we verify that $(Z_S(N),0_{E(S)})$ is a congruence pair.
For all $g\in Z_S(N)$, $e\in E(S)$, $ge = g\underline{g\inv g e} = geg\inv g = \underline{gg\inv e} g = egg\inv g = eg$, using $g\in Z_S(N)$ in the third equality. Thus $g$ commutes with every idempotent. Therefore $(Z_S(N),0_{E(S)})$ is a congruence pair by Lemma \ref{Lem:Mitsch}.

Suppose $g\in S$, $e\in E(S)$ and $g\,\zeta_N\,e$. Since $\zeta_N\subseteq \greenH$ as shown above, $g\inv g = gg\inv = e$. Then for all $a\in N$, $\underline{gag\inv} g = eaeg = eag = gg\inv ag$. Thus $g\in Z_S(N)$.

Conversely, suppose $(g,h)$ is in the congruence corresponding to $(Z_S(N),0_{E(S)})$. Then $gh\inv\in Z_S(N)$ and $g\greenH h$.
Consequently, $gg\inv = hh\inv$, $g\inv g = h\inv h$, and also $(gh\inv)\inv gh\inv = hg\inv gh\inv = hh\inv hh\inv = hh\inv = gg\inv$.  Now, for $a\in N$,
\begin{align*}
  gag\inv &= gg\inv\cdot gag\inv\cdot gg\inv && \\
  &= (gh\inv)\inv \underline{gh\inv\cdot gag\inv\cdot (gh\inv)\inv gh\inv} && \\
  &= \underline{(gh\inv)\inv gh\inv (gh\inv)\inv}\cdot gag\inv\cdot gh\inv && \text{since }gag\inv\in N, gh\inv\in Z_S(N)\,, \\
  &= h\underline{g\inv g}a\underline{g\inv g}h\inv && \\
  &= \underline{hh\inv h}a\underline{h\inv hh\inv} && \text{since }g\,\greenH\,h\\
  &= hah\inv\,.
\end{align*}
Thus $(g,h)\in \ker(\Psi_N) = \zeta_N$. This verifies that $(Z_S(N),0_{E(S)})$ is precisely the congruence pair of $\zeta_N$ and completes the proof.
\end{proof}

The case $N = E(S)$ is well known; $\zeta_{E(S)}$ is the maximal idempotent-separating congruence (\cite{Howie}, pp. 160--161). At the other extreme, we now have the following.

\begin{proof}[Proof of Theorem \ref{thm:center}, (2) coincides with (3); description of congruence pair]$ $\newline
  Apply Theorem \ref{Thm:kernels} to the case $S = N$, where $Z(S) = Z_S(S)$.
\end{proof}


\section{Syntactic description of the center}
\label{s:syntax}

Let $\xi = \{(a,b)\in S\times S \mid axb = bxa,\ \forall x\in S\}$  and $\zeta_S = \ker(\Psi_S) = \ker(\Phi_S)$.

We note that we cannot extend Theorem \ref{Thm:kernels} to include a generalization of item (4) of Theorem \ref{thm:center} to arbitrary normal inverse subsemigroups. Indeed, let $S$ be any noncommutative inverse semigroup and let $N=E(S)$. Then $\zeta_N = S\times S$, but for ever pair $(g,h)$ of noncommuting elements, $g\cdot g\inv g\cdot h = gh\neq hg = h\cdot gg\inv\cdot g$.

\begin{proof}[Proof of Theorem \ref{thm:center}, (2) is contained in (4)]
Suppose $(g,h)\in \zeta_S$. Because $\zeta_S$ is idempotent-separating (Theorem \ref{Thm:kernels}), $g\greenH h$ (Lemma \ref{Lem:below H iff tr=0}), that is, $gg\inv = hh\inv$ and $g\inv g = h\inv h$. It remains to prove that $g\,\xi\, h$. For all $x\in S$, we have
  \begin{align*}
  gxh &= gx\underline{hh\inv}h \\
  &= g\cdot xg\cdot g\inv h \\
  &= \psi_g(xg)h \\
  &= \psi_h(xg)h \\
  &= hxg\underline{h\inv h} \\
  &= hxgg\inv g \\
  &= hxg\,.
  \end{align*}
  We have shown $\zeta_S\subseteq \greenH\cap\,\xi$.
\end{proof}

\begin{proof}[Proof of Theorem \ref{thm:center}, (4) is contained in (2)]
  Suppose $(g,h)\in\greenH\cap\,\xi$. Since $g\greenH h$, we have $gg\inv = hh\inv$ and $g\inv g=h\inv h$.
  Since $gyh = hyg$ for all $y\in S$, we have, for every $x\in S$,
  \begin{align*}
  \psi_g(x) &= gxg\inv = gx\underline{g\inv g}g\inv = \underline{g\cdot xh\inv\cdot h}g\inv \\
  &= hxh\inv \underline{gg\inv} = hxh\inv hh\inv = hxh\inv = \psi_h(x)\,.
  \end{align*}
  We have shown $\greenH \cap\,\xi\subseteq \zeta_S$.
\end{proof}

What remains for Theorem \ref{thm:center} is to prove the equivalence of (1) with the rest.

\section{Centralizing congruences}\label{s:centralizing}

\subsection{Terms}

For all universal algebraic purposes, inverse semigroups are considered in the language $\{\cdot,\,\inv\}$.
(Note that the universal algebraic concepts, such as congruences, or the term condition introduced below, are sensitive to the choice of language. For example, it is fairly difficult to characterize abelian semigroups with respect to the language $\{\cdot\}$ \cite{Warne}, while in our setting, abelian inverse semigroups are just abelian groups).

In inverse semigroups, thanks to associativity and the identities $(xy)\inv = y\inv x\inv$ and $(x\inv)\inv=x$,
every term $t(x_1,\dots,x_n)$ is equivalent to a term of the form $x_{i_1}^{\epsilon_1}x_{i_2}^{\epsilon_2}\cdots x_{i_k}^{\epsilon_k}$,
where $i_1,\dots,i_k\in\{1,...,n\}$ and $\epsilon_1,\dots,\epsilon_k\in\{\pm1\}$, to be called a \emph{normal form} of $t$.

\subsection{The term condition}
Let $A$ be an algebraic structure with congruences $\alpha,\beta$. The \emph{term condition} for an $(n+1)$-ary term $t$ with respect to $\alpha,\beta$, shortly $\TC(t,\alpha,\beta)$, is the following condition:
for every $a\,\alpha\,b$ and every $u_1\,\beta\,v_1$, $\dots$, $u_n\,\beta\,v_n$,
\[  t(a,u_1,\dots,u_n) = t(a,v_1,\dots,v_n)\quad\text{implies}\quad t(b,u_1,\dots,u_n) = t(b,v_1,\dots,v_n).\]
We say that \emph{$\alpha$ centralizes $\beta$}, if the term condition $\TC(t,\alpha,\beta)$ holds for every term $t$ (of any arity $\geq2$).
It is easy to show that $\alpha$ centralizes $\beta$ if and only if $TC(t,\alpha,\beta)$ is satisfied for every term $t$ in which the first variable occurs only once: indeed, we can use the term condition several times to replace every occurrence one-by-one (see \cite[Lemma 4.1]{SV} for a formal proof).
A congruence $\alpha$ is called
\begin{itemize}
\item \emph{abelian} if $\alpha$ centralizes $\alpha$,
\item \emph{central} if $\alpha$ centralizes $1_A$.
\end{itemize}
Subsequently, the \emph{center} of $A$, denoted by $\zeta(A)$, is the largest congruence of $A$ which centralizes $1_A$ (it is not obvious that such a congruence always exists; for inverse semigroups, we show that in Lemmas \ref{Lem:center1} and \ref{Lem:center2}). Hence $\alpha$ is central if and only if $\alpha\leq\zeta(A)$.

In what follows, we will frequently use three special terms:
\[
{\ell}(x,y)=xy,\quad m(x,y,z)=yxz,\quad r(x,y)=yx\,.
\]
The principle observation is that in inverse semigroups, centralization reduces to the term condition for a single term.

\begin{prp}\label{Prp:just_m}
Let $S$ be an inverse semigroup with congruences $\alpha,\beta$. Then $\alpha$ centralizes $\beta$ if and only if $\TC(m,\alpha,\beta)$.
\end{prp}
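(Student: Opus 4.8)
The plan is to prove both implications of Proposition \ref{Prp:just_m}. The forward direction is trivial: if $\alpha$ centralizes $\beta$, then in particular $\TC(t,\alpha,\beta)$ holds for every term $t$, including $t=m$. So the entire content is the converse: assuming $\TC(m,\alpha,\beta)$, we must derive $\TC(t,\alpha,\beta)$ for every term $t$. By the reduction mentioned just before the statement (the first variable occurring only once suffices, via \cite[Lemma 4.1]{SV}), it is enough to treat terms $t(x,y_1,\dots,y_n)$ in which the distinguished first variable $x$ occurs exactly once. Using the normal-form description of terms in inverse semigroups, such a term can be written, after relabelling, as
\[
t(x,\bar y) = p(\bar y)\cdot x^{\epsilon}\cdot q(\bar y),
\]
where $p$ and $q$ are (possibly empty) words in the variables $y_1,\dots,y_n$ and their inverses, and $\epsilon\in\{\pm 1\}$.

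The key step is then to show $\TC(t,\alpha,\beta)$ reduces to $\TC(m,\alpha,\beta)$ for such a $t$. Suppose $a\,\alpha\,b$, $u_i\,\beta\,v_i$, and $t(a,\bar u)=t(a,\bar v)$, i.e. $p(\bar u)\,a^{\epsilon}\,q(\bar u) = p(\bar v)\,a^{\epsilon}\,q(\bar v)$. We want $p(\bar u)\,b^{\epsilon}\,q(\bar u) = p(\bar v)\,b^{\epsilon}\,q(\bar v)$. First, since $\beta$ is a congruence, $p(\bar u)\,\beta\,p(\bar v)$ and $q(\bar u)\,\beta\,q(\bar v)$, and $a^\epsilon\,\alpha\,b^\epsilon$ (using that congruences respect inversion). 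I would like to apply $\TC(m,\alpha,\beta)$ with the role of the ``first'' (once-occurring) variable played by $a^\epsilon$ versus $b^\epsilon$, and the two $\beta$-related pairs being $\bigl(p(\bar u),p(\bar v)\bigr)$ and $\bigl(q(\bar u),q(\bar v)\bigr)$. Concretely, $m(a^\epsilon, p(\bar u), q(\bar u)) = p(\bar u)\,a^\epsilon\,q(\bar u)$, so the hypothesis says $m(a^\epsilon,p(\bar u),q(\bar u)) = m(a^\epsilon,p(\bar v),q(\bar v))$; then $\TC(m,\alpha,\beta)$ yields $m(b^\epsilon,p(\bar u),q(\bar u)) = m(b^\epsilon,p(\bar v),q(\bar v))$, which is exactly $t(b,\bar u)=t(b,\bar v)$.

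The one genuine subtlety — and the step I expect to need the most care — is the handling of the exponent $\epsilon$ and the possibility that $p$ or $q$ is the empty word. When $\epsilon=1$ the match with $m(x,y,z)=yxz$ is immediate. When $\epsilon=-1$ we invoke the fact that $a\,\alpha\,b$ implies $a\inv\,\alpha\,b\inv$ (congruences of inverse semigroups are closed under inversion, as noted in Section 2.2), so we simply run the argument with $a\inv,b\inv$ in place of $a,b$; the hypothesis $t(a,\bar u)=t(a,\bar v)$ is literally $m(a\inv,p(\bar u),q(\bar u))=m(a\inv,p(\bar v),q(\bar v))$, and we conclude $m(b\inv,\dots)=m(b\inv,\dots)$, i.e. $t(b,\bar u)=t(b,\bar v)$. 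If $p$ or $q$ is empty, we substitute a neutral stand-in: either pad with $x^\epsilon x^{-\epsilon}$-type idempotent factors, or more cleanly observe that $\TC(\ell,\alpha,\beta)$ and $\TC(r,\alpha,\beta)$ (for $\ell(x,y)=xy$, $r(x,y)=yx$) already follow from $\TC(m,\alpha,\beta)$ by specializing one of the two $\beta$-arguments of $m$ to a constant — but since we have no constants in the language, the correct move is instead to substitute $y_j$ for the missing block for some fixed index $j$ and use that the pair $(y_j,y_j)$ is trivially in $\beta$; taking $p\equiv y_1$ or $q\equiv y_1$ (reading that variable's occurrence as harmless since we may also substitute the actual empty-word value back in at the end). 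I would spell this padding out carefully so that the degenerate cases $t=\ell$, $t=r$, and $t(x)=x^{\pm1}$ are all visibly covered, and then conclude that $\TC(t,\alpha,\beta)$ holds for all $t$ with the first variable occurring once, hence for all $t$, i.e. $\alpha$ centralizes $\beta$.
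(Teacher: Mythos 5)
Your overall route is the same as the paper's: the forward direction is immediate, and for the converse you reduce to terms with a single occurrence of the distinguished variable, write such a term in normal form as $p(\bar y)\,x^{\epsilon}\,q(\bar y)$, handle $\epsilon=-1$ by closure of congruences under inversion, and observe that the case where both $p$ and $q$ are nonempty is literally an instance of $\TC(m,\alpha,\beta)$ applied to the $\beta$-pairs $\bigl(p(\bar u),p(\bar v)\bigr)$ and $\bigl(q(\bar u),q(\bar v)\bigr)$. All of that is correct and matches the paper.

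The one genuine gap is your treatment of the boundary cases where $p$ or $q$ is empty, i.e.\ the terms $\ell(x,y)=xy$ and $r(x,y)=yx$. Your final plan --- pad the missing block with a fresh variable $y_j$, use the reflexive pair $(y_j,y_j)\in\beta$, and then ``substitute the actual empty-word value back in at the end'' --- does not work: an inverse semigroup need not have an identity element, so there is no empty-word value to substitute, and left multiplication is not cancellative, so from $c\,b^{\epsilon}q(\bar u)=c\,b^{\epsilon}q(\bar v)$ for an arbitrary $c$ you cannot recover $b^{\epsilon}q(\bar u)=b^{\epsilon}q(\bar v)$. The fix is exactly the first option you float and then abandon: pad with the specific idempotent built from $b$ (the \emph{second} element of the $\alpha$-pair, not $a$). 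Concretely, if $au=av$ then $m(a,bb\inv,u)=bb\inv au=bb\inv av=m(a,bb\inv,v)$, so $\TC(m,\alpha,\beta)$ with the reflexive pair $(bb\inv,bb\inv)\in\beta$ gives $m(b,bb\inv,u)=m(b,bb\inv,v)$, i.e.\ $bb\inv bu=bb\inv bv$, which is $bu=bv$ since $bb\inv b=b$; note that padding with $aa\inv$ instead would leave you stuck at $aa\inv bu=aa\inv bv$. This is precisely how the paper derives $\TC(\ell,\alpha,\beta)$, and dually $\TC(r,\alpha,\beta)$, from $\TC(m,\alpha,\beta)$. With that one step repaired, your argument is complete and coincides with the paper's proof.
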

\begin{proof}
The necessity is clear from the definitions. For the sufficiency, we first prove $\TC(m,\alpha,\beta)$ implies both $\TC(\ell,\alpha,\beta)$ and $\TC(r,\alpha,\beta)$.
Assume that $a\,\alpha\,b$, $u\,\beta\,v$, and $\ell(a,u) = \ell(a,v)$, that is, $au=av$. Then
\[
m(a,bb^{-1},u) = bb\inv au = bb\inv av = m(a,bb^{-1},v)\,.
\]
Applying $\TC(m,\alpha,\beta)$, we have
\[
bu = bb\inv bu = m(b,bb^{-1},u) = m(b,bb^{-1},v) = bb\inv bv = bv\,.
\]
Thus $\TC(\ell,\alpha,\beta)$ and a dual argument yields $\TC(r,\alpha,\beta)$.

Consider a term $t(x_0,x_1,\dots,x_n) = x_{i_1}^{\epsilon_1}x_{i_2}^{\epsilon_2}\cdots x_{i_k}^{\epsilon_k}$ in normal form with a single occurrence of $x_0$.
Again assume $a\,\alpha\,b$ and $u_i\,\beta\,v_i$ for all $i$. Then $u_{i_p}^{\epsilon_p}\cdots u_{i_q}^{\epsilon_q}\,\beta\, v_{i_p}^{\epsilon_p}\cdots v_{i_q}^{\epsilon_q}$

First, assume that $x_0$ occurs with a positive exponent. There are several cases:
\begin{itemize}
\item $t=x_0$. Then $\TC(t,\alpha,\beta)$ is always true.
\item $x_0$ is the leftmost variable of $t$. Then $\TC(t,\alpha,\beta)$ follows from applying
$\TC(\ell,\alpha,\beta)$ to $\ell(a,u_{i_2}^{\epsilon_2}\cdots u_{i_k}^{\epsilon_k}) = \ell(a,v_{i_2}^{\epsilon_2}\cdots v_{i_k}^{\epsilon_k})$.
\item $x_0$ is the rightmost variable of $t$. A dual argument shows that $\TC(t,\alpha,\beta)$ follows from $\TC(r,\alpha,\beta)$.
\item $x_0$ is in the $j$th place, $1<j<k$. Then $\TC(t,\alpha,\beta)$ follows from applying $\TC(m,\alpha,\beta)$ to
\[
m(a,u_{i_1}^{\epsilon_1}\cdots u_{i_{j-1}}^{\epsilon_{j-1}},u_{i_{j+1}}^{\epsilon_{j+1}}\cdots u_{i_k}^{\epsilon_k})=m(a,v_{i_1}^{\epsilon_1}\cdots v_{i_{j-1}}^{\epsilon_{j-1}},v_{i_{j+1}}^{\epsilon_{j+1}}\cdots v_{i_k}^{\epsilon_k})\,.
\]
\end{itemize}
Finally, assume that $x_0$ occurs with a negative exponent. Since $x\mapsto x\inv$ is a permutation of $S$, and congruences are invariant with respect to inversion, $\TC(t,\alpha,\beta)$ follows from $\TC(t',\alpha,\beta)$ where $t'$ results from $t$ by replacing $x\inv$ for $x$.

This completes the proof.
\end{proof}

\begin{lem}\label{Lem:ab_intersect}
Let $S$ be an inverse semigroup and let $\alpha,\beta$ be congruences of $S$. If $\alpha$ centralizes $\beta$, then $\alpha\cap\beta$ is idempotent-separating.
\end{lem}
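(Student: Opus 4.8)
The plan is to work with the congruence $\gamma = \alpha\cap\beta$ directly and show that its trace is trivial, i.e.\ that $e\,\gamma\,f$ with $e,f\in E(S)$ forces $e=f$. Since $\gamma\leq\alpha$ and $\gamma\leq\beta$, whenever $e\,\gamma\,f$ we have both $e\,\alpha\,f$ and $e\,\beta\,f$, so we are free to use the term condition $\TC(m,\alpha,\beta)$ (and hence, by Proposition \ref{Prp:just_m}, $\TC(\ell,\alpha,\beta)$ and $\TC(r,\alpha,\beta)$) with the $\alpha$-related pair taken to be $e,f$ and the $\beta$-related pairs taken from the class of $e$ and $f$ under $\beta$.

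First I would record the obvious idempotent identities that make the term condition bite: for an idempotent $e$ we have $ee=e$, and $e$ is its own inverse, so $e^{-1}e = ee^{-1} = e$. The idea is to feed $m(e,-,-)$ an instance that collapses because $e$ is idempotent, then transfer it to $m(f,-,-)$. Concretely, suppose $e\,\gamma\,f$. Apply $\TC(\ell,\alpha,\beta)$ with the $\alpha$-pair $(e,f)$ and the $\beta$-pair $(e,f)$ (legitimate since $e\,\beta\,f$): from $\ell(e,e)=ee=e=ef$ — wait, $ee = e$ but $ef$ need not equal $e$, so instead I would use the pair $(e,f)$ more carefully. The clean choice is $\TC(\ell,\alpha,\beta)$ applied to $\ell(e,f) = ef$ versus $\ell(e,e)=e$; these are not obviously equal. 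So the right move is: use $\TC(r,\alpha,\beta)$ with $\alpha$-pair $(e,f)$ and $\beta$-pair $(f,e)$, noting $r(e,f)=fe=ef$ and $r(e,e)=ee=e$, and these two differ. Let me instead organize it as: from $e\,\gamma\,f$ we get $ef\,\gamma\,ff = f$ and $ef\,\gamma\,ee=e$ (multiplying the relation $e\,\gamma\,f$ on the appropriate side by $e$ or $f$ and using that $\gamma$ is a congruence), hence $e\,\gamma\,ef\,\gamma\,f$, so \emph{we may assume $e\leq f$ in the natural partial order}, i.e.\ $ef=e$. Now with $ef=e$ in hand, apply $\TC(\ell,\alpha,\beta)$ to the $\alpha$-pair $(e,f)$ and $\beta$-pair $(e,f)$: we have $\ell(e,e)=ee=e$ and $\ell(e,f)=ef=e$, so $\ell(e,e)=\ell(e,f)$, and the term condition yields $\ell(f,e)=\ell(f,f)$, i.e.\ $fe = ff = f$; combined with $ef=e$ and commutativity of idempotents, $e = ef = fe = f$.

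The structure is therefore: (1) reduce to the comparable case $ef=e$ using that $\gamma$ is a congruence; (2) invoke $\TC(\ell,\alpha,\beta)$ — available by Proposition \ref{Prp:just_m} since $\gamma\leq\alpha\cap\beta$ — on the instance $ee = ef$ to conclude $fe=f$; (3) finish with commutativity of $E(S)$. The only real subtlety, and the step I expect to need the most care in writing, is step (1): making sure that the pair we hand to the term condition is genuinely $\alpha$-related in its first coordinate and $\beta$-related in the rest, which is exactly why the hypothesis is that $\alpha$ centralizes $\beta$ (not merely that each centralizes itself) and why passing to $e\leq f$ via $ef$ is harmless — $ef$ lies in the $\gamma$-class of both $e$ and $f$. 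Once that bookkeeping is pinned down, the computation is a two-line application of $\TC(\ell,\alpha,\beta)$.
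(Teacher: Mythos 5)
Your proof is correct and follows essentially the same route as the paper: apply $\TC(\ell,\alpha,\beta)$ with $\alpha$-pair $(e,f)$ to a premise that holds automatically by idempotency, then finish with commutativity of $E(S)$. The paper skips your preliminary reduction to comparable idempotents by instead taking the $\beta$-pair $(fe,f)$ (legitimate since $fe\,\beta\,ff=f$), for which the premise $\ell(e,fe)=efe=ef=\ell(e,f)$ holds outright, giving $fe=f$ and, by symmetry, $ef=e$ directly.
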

\begin{proof}
Assume $e,f\in E(S)$, $e\,\alpha\,f$ and $e\,\beta\,f$. Since $efe = ef$, we may apply $\TC(\ell,\alpha,\beta)$ to $\ell(e,fe) = \ell(e,f)$ to
obtain $\ell(f,fe) = \ell(f,f)$, that is, $ffe = ff$. Thus $fe = f$. Exchanging the roles of $e,f$, we also have $ef=e$. Thus $e=f$ as claimed.
\end{proof}

As in earlier sections, for a normal inverse subsemigroup $N$ of an inverse semigroup $S$, let $\zeta_N = \ker(\Psi_N) = \ker(\Phi_N)$.

\begin{thm}\label{Thm:xi_TC}
Let $S$ be an inverse semigroup, let $\alpha,\beta$ be congruences of $S$, and assume $\alpha\leq \zeta_{N_{\beta}}$. Then $\alpha$ centralizes $\beta$.
\end{thm}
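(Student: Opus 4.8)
The plan is to reduce everything, via Proposition \ref{Prp:just_m}, to verifying the single term condition $\TC(m,\alpha,\beta)$, and then to exploit the hypothesis $\alpha\le\zeta_{N_\beta}$ by translating it into a concrete equational statement about how $\alpha$-related elements conjugate the kernel $N_\beta$. So the first step is: assume $a\,\alpha\,b$, $u_1\,\beta\,v_1$, $u_2\,\beta\,v_2$, and $m(a,u_1,u_2)=m(a,v_1,v_2)$, i.e.\ $u_1au_2 = v_1av_2$; the goal is $u_1bu_2 = v_1bv_2$.

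The key observation to set up is that since $u_1\,\beta\,v_1$ and $u_2\,\beta\,v_2$, the "correction" elements $v_1\inv u_1$ and $u_2 v_2\inv$ (or some such combination) lie in or near $N_\beta$, the kernel of $\beta$; more precisely, $u_1 v_1\inv \,\beta\, u_1 u_1\inv \in E(S)$, so $u_1 v_1\inv \in N_\beta$, and similarly $v_2\inv u_2 \in N_\beta$ after adjusting by idempotents. I would first normalize: because $\beta$ is idempotent-separating on the relevant part — wait, that is not given — instead I should use Lemma \ref{Lem:ab_intersect} only at the end if needed, and for now work directly. The plan is to rewrite $u_1bu_2$ by inserting idempotents $u_1\inv u_1$, $u_2 u_2\inv$ etc., and then use the relation $a\,\alpha\,b$ together with $\alpha \le \zeta_{N_\beta} = \ker(\Psi_{N_\beta})$, which says $\psi_a$ and $\psi_b$ agree on $N_\beta$, i.e.\ $ana\inv = bnb\inv$ for all $n\in N_\beta$, and moreover (since $\zeta_{N_\beta}\subseteq\greenH$) $aa\inv = bb\inv$, $a\inv a = b\inv b$. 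These $\greenH$-equalities will be essential for converting between left and right multiplications.

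Concretely, from $u_1 a u_2 = v_1 a v_2$ I would multiply on the left by $v_1\inv$ and on the right by $v_2\inv$, arrange the idempotents using commutativity of $E(S)$, and isolate an equation of the form $n a m = n' a m'$ where $n,n'$ are built from $u_1,v_1$ and $m,m'$ from $u_2,v_2$; then pass $a$ to the outside as a conjugator. The delicate point is that $m(x,y,z)=yxz$ has $x$ in the middle, so $a$ genuinely appears flanked on both sides, and I need to convert a two-sided multiplication $n a m$ into a conjugation $a(\cdot)a\inv$ applied to something in $N_\beta$, times a residual idempotent or element fixed by both $\psi_a,\psi_b$. I expect this manipulation — showing that the discrepancy between $u_1 a u_2$ and $u_1 b u_2$ is governed entirely by the action of $\psi_a$ versus $\psi_b$ on an element of $N_\beta$, so that the hypothesis $\psi_a|_{N_\beta}=\psi_b|_{N_\beta}$ closes the gap — to be the main obstacle. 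The $\greenH$-relation $aa\inv = bb\inv$, $a\inv a = b\inv b$ should be exactly what allows the residual idempotent bookkeeping to match up on both sides.

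A cleaner route, which I would try first, is to invoke Theorem \ref{thm:center}(4) applied to $N=N_\beta$: since $\zeta_{N_\beta}$ by Theorem \ref{Thm:kernels} is idempotent-separating with congruence pair $(Z_S(N_\beta),0_{E(S)})$, $a\,\alpha\,b$ (hence $a\,\zeta_{N_\beta}\,b$) gives $a\greenH b$ and $ab\inv\in Z_S(N_\beta)$; then $u_1bu_2 = u_1 (b\inv)\inv u_2$ and I can write $b = (ab\inv)\inv a \cdot$ (idempotent), substituting and using that $ab\inv$ commutes appropriately with elements of $N_\beta$ sandwiched by $u_i$. If the $u_i,v_i$ can be replaced by their "$N_\beta$-parts" modulo idempotents that both $a$ and $b$ respect, this becomes a direct computation much like the last displayed align in the proof of Theorem \ref{Thm:kernels}. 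I would present whichever of these two computations turns out shorter, but either way the crux is the same: reduce to $\TC(m,\alpha,\beta)$, extract $a\greenH b$ and the conjugation-invariance on $N_\beta$ from $\alpha\le\zeta_{N_\beta}$, and then push the flanking factors into $N_\beta$ so that swapping $a$ for $b$ leaves the product unchanged.
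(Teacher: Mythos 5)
Your setup is the same as the paper's: reduce to $\TC(m,\alpha,\beta)$ via Proposition \ref{Prp:just_m}, extract $a\,\greenH\,b$ and $\psi_a|_{N_\beta}=\psi_b|_{N_\beta}$ from $\alpha\leq\zeta_{N_\beta}$, and note that elements such as $u_1v_1\inv$ and $yv\inv$ land in $N_\beta$ because they are $\beta$-related to idempotents. But the proof stops there: the entire substance of the argument is the computation deriving $u_1bu_2=v_1bv_2$ from $u_1au_2=v_1av_2$, and you explicitly defer it (``I expect this manipulation \dots to be the main obstacle'', ``I would present whichever of these two computations turns out shorter''). That is a genuine gap, not a presentational one. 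The paper's trick is not to ``isolate $nam=n'am'$ and pass $a$ outside as a conjugator''; it is to append the idempotent $(by)^{-1}by$ on the right of $xby$, rewrite it as $x\cdot\psi_b(yy\inv)\cdot by=x\cdot\psi_a(yy\inv)\cdot by$ so that the prefix $xay\cdots$ can be replaced by $uav\cdots$ using the hypothesis, and then apply $\psi_a=\psi_b$ once more to the element $vy\inv\in N_\beta$. Crucially, this only yields $xby=ubv(by)\inv by$, with a residual idempotent factor, and a \emph{second} computation of the same kind is needed to show $ubv=ubv(by)\inv by$. Your sketch does not anticipate this residual-idempotent obstruction, and ``the residual idempotent bookkeeping should match up'' is precisely the claim that needs proof.

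Your proposed ``cleaner route'' is also suspect. You want to apply the analogue of Theorem \ref{thm:center}(4) to $N=N_\beta$, i.e., to use a two-sided commutation identity $axb=bxa$ for $\zeta_{N_\beta}$-related pairs. The paper shows explicitly (beginning of Section \ref{s:syntax}, with $N=E(S)$ in a noncommutative $S$) that item (4) does \emph{not} generalize to arbitrary normal inverse subsemigroups: $\zeta_N$-related elements need not satisfy any such identity. What Theorem \ref{Thm:kernels} actually gives you for general $N$ is only $ab\inv\in Z_S(N_\beta)$, i.e., $gg\inv ng=gng\inv g$ for $n\in N_\beta$ where $g=ab\inv$, which is far weaker than the ``commutes appropriately with elements of $N_\beta$'' you would need. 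So the first route is incomplete at its crux and the second is built on a statement the paper refutes.
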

\begin{proof}
  We verify $\TC(m,\alpha,\beta)$ and then the desired result will follow from Proposition \ref{Prp:just_m}. Assume $a\,\alpha\,b$, $x\,\beta\,u$, $y\,\beta\,v$, and
  $xay = uav$. We will prove $xby = ubv$. First,
  \begin{align*}
    xby &= xby(by)\inv by && \\
    &= x\underline{b yy\inv b\inv}by && \\
    &= \underline{xay}y\inv a\inv by && \text{since }(a,b)\in \ker(\Psi_{N_{\beta}}) \\
    &= u\underline{avy\inv a\inv}by && \\
    &= ubvy\inv b\inv by && \text{since }(a,b)\in \ker(\Psi_{N_{\beta}})\text{ and }vy\inv\in N_{\beta} \\
    &= ubv(by)\inv by\,,
  \end{align*}
  that is,
  \begin{equation}\label{Eqn:xi_TC_1}
  xby = ubv(by)\inv by\,.
  \end{equation}
  Next,
  \begin{align*}
    ubv &= ubv(bv)\inv bv && \\
    &= u\underline{bvv\inv b\inv} bv && \\
    &= \underline{uav}v\inv a\inv bv && \text{since }(a,b)\in \ker(\Psi_{N_{\beta}}) \\
    &= x\underline{ayv\inv a\inv} bv && \\
    &= \underline{xby}v\inv b\inv bv &&  \text{since }(a,b)\in \ker(\Psi_{N_{\beta}})\text{ and }yv\inv\in N_{\beta} \\
    &= ubv\underline{(by)\inv by (bv)\inv bv} && \text{by }\eqref{Eqn:xi_TC_1} \\
    &= ubv(bv)\inv bv (by)\inv by && \\
    &= ubv (by)\inv by\,, &&
  \end{align*}
  that is,
  \begin{equation}\label{Eqn:xi_TC_2}
  ubv = ubv(by)\inv by\,.
  \end{equation}
  From \eqref{Eqn:xi_TC_1} and \eqref{Eqn:xi_TC_2}, we have $xby=ubv$, as claimed.
\end{proof}

\subsection{Abelian congruences}\label{ss:abelian}

\begin{lem}\label{Lem:abelian1}
Let $S$ be an inverse semigroup and let $\alpha$ be an abelian congruence. Then $\alpha$ is idempotent-separating and $N_{\alpha}$ is commutative.
\end{lem}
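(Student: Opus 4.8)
The plan is to split the statement into its two parts and handle them in sequence, extracting the maximum mileage from the term condition $\TC(\ell,\alpha,\alpha)$ and $\TC(m,\alpha,\alpha)$, which are available via Proposition \ref{Prp:just_m} and its proof. First, to show $\alpha$ is idempotent-separating: if $e\,\alpha\,f$ with $e,f\in E(S)$, then since $\alpha$ is abelian we have $\alpha$ centralizes $\alpha$, and since $e\,\alpha\,f$ trivially gives $e\,\alpha\,f$ in the ``$\beta$'' slot too, Lemma \ref{Lem:ab_intersect} applies directly with $\beta=\alpha$: $\alpha\cap\alpha=\alpha$ is idempotent-separating. So this half is essentially immediate from the already-proved Lemma \ref{Lem:ab_intersect}, and by Lemma \ref{Lem:below H iff tr=0} we additionally get $\alpha\subseteq\greenH$, which will be useful below.

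For the second part, I want to show $N_\alpha$ is commutative, i.e. any two elements $a,b$ lying in $\alpha$-classes containing idempotents commute. The key point is that $a\in N_\alpha$ means $a\,\alpha\,e$ for some idempotent $e$; since $\alpha$ is idempotent-separating we've just shown $a\,\greenH\,e$, hence $aa\inv=a\inv a=e$, so $a\,\alpha\,aa\inv$ and similarly $b\,\alpha\,bb\inv$. Now take two such elements $a,b\in N_\alpha$. The natural move is to feed the term condition a suitable instance. Since $a\,\alpha\,aa\inv$ and, say, $b\,\alpha\,bb\inv$, and we want to compare products, I would set up an equality of the form $t(a,\ldots)=t(a,\ldots)$ involving only the idempotent representatives on one side (where things collapse because idempotents commute), then apply the term condition to pull $a$ out of its class. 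Concretely, using $\TC(\ell,\alpha,\alpha)$ or $\TC(m,\alpha,\alpha)$: because $aa\inv\cdot b = b\cdot aa\inv$? — no, that need not hold. Instead the cleaner route is to note $a\,\alpha\,aa\inv$ in the $\alpha$ role and $b\,\alpha\,bb\inv$ in the $\beta(=\alpha)$ role, write down something like $m(aa\inv, b, b\inv)$-type identities where substituting $aa\inv$ for $a$ trivializes the left side, and then transfer.

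The honest approach: I expect to exploit that $\ker(\Psi_{N_\alpha})$ contains $\alpha$ — wait, that is what Theorem \ref{Thm:abelian and central congruence} will assert, so I should not assume it; rather I should prove commutativity by hand. I would argue: since $b\in N_\alpha\subseteq$ (kernel is full and normal), and $a\,\alpha\,aa\inv$, apply $\TC$ with the term $m(x,y,z)=yxz$: we have $m(aa\inv, b, b\inv) = b\,aa\inv\,b\inv$ and I want to relate this to $aba\inv$. Take $u=b,v=aa\inv a\inv \cdot(\ldots)$ — this is getting delicate. The main obstacle, and where I'd spend real effort, is choosing the right instance of the term condition: I need an identity $t(a,\vec u)=t(a,\vec v)$ that genuinely holds in $S$ (exploiting commutativity of idempotents and the $\greenH$-relations $a\,\greenH\,aa\inv$) and whose conclusion $t(e,\vec u)=t(e,\vec v)$, after simplification using $e=aa\inv=a\inv a$, yields $ab=ba$. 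I anticipate the successful choice is along the lines of applying $\TC(\ell,\alpha,\alpha)$ to $\ell(a, a\inv b)=\ell(a, \ldots)$ where the right-hand argument is rewritten so that replacing $a$ by $a\inv a$ on the outside forces the product to rearrange; once the correct substitution is found the verification is a short idempotent-commutativity computation. Everything else — the reduction to single-occurrence terms, the availability of $\TC(\ell)$ and $\TC(m)$, idempotent-separation — is already in hand from the preceding lemmas.
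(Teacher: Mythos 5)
Your first half is correct and is exactly the paper's argument: Lemma \ref{Lem:ab_intersect} with $\beta=\alpha$ gives idempotent-separation at once, and Lemma \ref{Lem:below H iff tr=0} then gives $a\,\greenH\,e$, so $aa\inv=a\inv a=e$ for $a\,\alpha\,e$. The second half, however, is not a proof: you never produce the instance of the term condition that yields $ab=ba$, and you say so yourself (``this is getting delicate,'' ``I anticipate the successful choice is along the lines of\dots''). Worse, you explicitly discard the one fact that makes the argument work. You write ``because $aa\inv\cdot b = b\cdot aa\inv$? --- no, that need not hold,'' but it \emph{does} hold: since $\alpha$ is idempotent-separating, $(N_\alpha,0_{E(S)})$ is its congruence pair, and Lemma \ref{Lem:Mitsch} says precisely that every element of $N_\alpha$ commutes with every idempotent of $S$. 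This commutation of $a,b,a\inv,b\inv$ with $e=aa\inv$ and $f=bb\inv$ is the missing ingredient.

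With it, the paper's instance is short: since $a$ commutes with $e$ and $f$ and idempotents commute, $afe=efa$, i.e.\ $m(f,a,e)=m(f,e,a)$ where $m(x,y,z)=yxz$. Here $f\,\alpha\,b$ sits in the first slot and $a\,\alpha\,e$, $e\,\alpha\,a$ sit in the remaining slots, so $\TC(m,\alpha,\alpha)$ gives $m(b,a,e)=m(b,e,a)$, that is, $abe=eba$; using that $b$ commutes with $e$ and that $ae=ea=a$, this collapses to $ab=ba$. Your instinct to ``set up an equality $t(a,\vec u)=t(a,\vec v)$ that holds by idempotent commutativity and then transfer'' was the right one, but without invoking Lemma \ref{Lem:Mitsch} you cannot make any such identity hold, and the proposal as written does not close.
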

\begin{proof}
The first claim follows from Lemma \ref{Lem:ab_intersect}. For the second, assume $a,b\in N_{\alpha}$ and let $e,f\in E(S)$ be such that
$a\,\alpha\,e$ and $b\,\alpha\,f$. By Lemma \ref{Lem:below H iff tr=0},
$a\greenH e$ and $b\greenH f$. Thus $aa\inv = a\inv a = e$ is the identity element of the $\greenH$-class of $a$ and $bb\inv = b\inv b = f$
is the identity element of the $\greenH$-class of $b$. Since $\alpha$ is idempotent-separating,
every element of $N_{\alpha}$ commutes with every idempotent by Lemma \ref{Lem:Mitsch}. In particular, $a$ and $b$
and their inverses commute with $e$ and $f$.

Since $a$ commutes with $e,f$, we have $a\underline{f}e = e\underline{f}a$. We may thus apply $\TC(m,\alpha,\alpha)$ to $m(f,a,e) = m(f,e,a)$
to get $m(b,a,e) = m(b,e,a)$, that is, $abe = eba$. Since $b$ commutes with $e$ and $ea=ae=a$, we get $ab=ba$, as claimed.
\end{proof}

\begin{lem}\label{Lem:abelian2}
Let $S$ be an inverse semigroup and let $N$ be a commutative, normal inverse subsemigroup. Then the congruence corresponding to the congruence pair $(N,0_{E(S)})$ is abelian.
\end{lem}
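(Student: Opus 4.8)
The plan is to deduce this from Theorem \ref{Thm:xi_TC}, so that essentially no new computation is needed. First I would check that $(N,0_{E(S)})$ genuinely is a congruence pair, so that the statement makes sense: $N$ is normal by hypothesis, and since $N$ is full and commutative, every element of $N$ commutes with every idempotent, so Lemma \ref{Lem:Mitsch} applies. Write $\alpha$ for the resulting congruence; by construction $N_\alpha = N$ and $\tr(\alpha) = 0_{E(S)}$, so $a\,\alpha\,b$ holds exactly when $ab\inv \in N$ and $aa\inv = bb\inv$.

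Next I would establish the inclusion $N \subseteq Z_S(N)$, which is immediate from commutativity: for $g,a \in N$ we have $g\inv, gg\inv \in N$ (the latter since $N$ is full), and since $N$ is commutative and $gg\inv g = g$, both $gg\inv a g$ and $gag\inv g$ collapse to $ag$. By Theorem \ref{Thm:kernels}, $\zeta_N = \ker(\Psi_N)$ is idempotent-separating with congruence pair $(Z_S(N),0_{E(S)})$, so comparing the two pairs gives $\alpha \subseteq \zeta_N$ directly: if $ab\inv \in N$ and $aa\inv = bb\inv$, then $ab\inv \in Z_S(N)$ and $aa\inv = bb\inv$, i.e.\ $a\,\zeta_N\,b$.

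Finally, since $N = N_\alpha$, the inclusion just proved reads $\alpha \le \zeta_{N_\alpha}$, so Theorem \ref{Thm:xi_TC} applied with $\beta = \alpha$ yields that $\alpha$ centralizes $\alpha$; that is, $\alpha$ is abelian.

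I do not expect a genuine obstacle here — all the work is already packaged in Theorem \ref{Thm:xi_TC}, and the only thing to verify is the trivial inclusion $N \subseteq Z_S(N)$. The single point requiring care is the congruence-pair bookkeeping: noticing that $N_\alpha = N$ and hence $\zeta_N = \zeta_{N_\alpha}$, so that Theorem \ref{Thm:xi_TC} is being invoked with its two congruences taken equal. A self-contained alternative, avoiding Theorem \ref{Thm:xi_TC}, would be to verify $\TC(m,\alpha,\alpha)$ by hand: given $a\,\alpha\,b$, $x\,\alpha\,u$, $y\,\alpha\,v$ with $xay = uav$, use $\alpha \subseteq \greenH$ (Lemma \ref{Lem:below H iff tr=0}) together with commutativity of $N$ to massage the products, essentially replaying the calculation in the proof of Theorem \ref{Thm:xi_TC}; but the packaged version is shorter and cleaner.
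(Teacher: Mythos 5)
Your proof is correct, and its overall skeleton is the same as the paper's: establish $\alpha\subseteq\zeta_N$ and then invoke Theorem \ref{Thm:xi_TC} with $\beta=\alpha$ (using $N_\alpha=N$). The one step you handle differently is the inclusion $\alpha\subseteq\zeta_N$. The paper proves it by a direct element computation: for $g\,\alpha\,h$ one has $g\,\greenH\,h$, hence $gh\inv hg\inv=gg\inv=hh\inv$, and then $gag\inv = gh\inv\cdot hah\inv\cdot hg\inv = hah\inv\cdot gh\inv hg\inv = hah\inv$ for all $a\in N$, using commutativity of $N$ to swap the two factors $gh\inv, hah\inv\in N$; this shows $\psi_g=\psi_h$ directly. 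You instead observe the trivial inclusion $N\subseteq Z_S(N)$ (immediate from commutativity of $N$) and then compare congruence pairs: both $\alpha$ and $\zeta_N$ have trace $0_{E(S)}$, their kernels are $N$ and $Z_S(N)$ respectively by Theorem \ref{Thm:kernels}, and the explicit pair-to-congruence formula ($a\,\gamma\,b$ iff $ab\inv\in N_\gamma$ and $aa\inv\,\tr(\gamma)\,bb\inv$) makes the correspondence monotone in the kernel for a fixed trace. Both arguments are sound; yours trades the short computation for a heavier reliance on the full statement of Theorem \ref{Thm:kernels} (specifically the identification of the congruence pair of $\zeta_N$), which arguably makes the role of commutativity more transparent. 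No gaps.
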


\begin{proof}
  First note that $(N,0_{E(S)})$ is a congruence pair by Lemma \ref{Lem:Mitsch}. Let $\alpha$ denote the
  corresponding congruence. We will show that $\alpha\subseteq \zeta_N$ and then apply Theorem \ref{Thm:xi_TC}.

  Assume $g\,\alpha\,h$. Since $\alpha$ is idempotent-separating, $g\inv g = h\inv h$ and $gg\inv = hh\inv$ (Lemma \ref{Lem:below H iff tr=0}), and so
  $gh\inv hg\inv = gg\inv gg\inv = gg\inv = hh\inv$. We use this in the following calculation. For all $a\in N$,
  \begin{align*}
  \underline{g}a\underline{g\inv} &= g\underline{g\inv g}a\underline{g\inv g}g\inv && \\
  &= \underline{gh\inv}\cdot \underline{hah\inv}\cdot hg\inv && \\
  &= hah\inv\cdot \underline{gh\inv\cdot hg\inv} && \text{since }gh\inv, hah\inv \in N \\
  &= hah\inv hh\inv && \\
  &= hah\inv\,.
  \end{align*}
  Thus $\alpha\subseteq \ker(\Psi_N) = \zeta_N$. Therefore $\alpha$ centralizes itself by Theorem \ref{Thm:xi_TC}, that is, $\alpha$ is abelian.
\end{proof}

Theorem \ref{Thm:abelian and central congruence}(1) now follows from Lemmas \ref{Lem:abelian1} and \ref{Lem:abelian2}.

\subsection{The center}

\begin{lem}\label{Lem:center1}
Let $S$ be an inverse semigroup and let $\alpha$ be a central congruence. Then $\alpha\subseteq \zeta_S$.
\end{lem}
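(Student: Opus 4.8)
The statement asserts that a central congruence $\alpha$ (i.e. $\alpha$ centralizes $1_S$) is contained in $\zeta_S = \ker(\Psi_S)$. Since $\zeta_S$ is idempotent-separating and equals $\ker(\Psi_S)$, the natural route is: (a) first show $\alpha$ is idempotent-separating, hence $\alpha \subseteq \greenH$; (b) then for $g\,\alpha\,h$ show $\psi_g = \psi_h$, i.e. $gxg\inv = hxh\inv$ for all $x \in S$, by exploiting the term condition $\TC(m,\alpha,1_S)$ with a cleverly chosen instance.

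For step (a) I would apply Lemma \ref{Lem:ab_intersect} with $\beta = 1_S$: since $\alpha$ centralizes $1_S$, the intersection $\alpha \cap 1_S = \alpha$ is idempotent-separating. Then Lemma \ref{Lem:below H iff tr=0} gives $\alpha \subseteq \greenH$; in particular $g\,\alpha\,h$ forces $gg\inv = hh\inv$ and $g\inv g = h\inv h$. This part is routine.

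For step (b), the plan is to find an equation of the form $m(g, u_1, u_2) = m(g, v_1, v_2)$ that holds trivially (or nearly so) because of the $\greenH$-relationship between $g$ and $h$, and whose image under $\TC(m,\alpha,1_S)$ — replacing the first argument $g$ by $h$ — yields exactly $gxg\inv = hxh\inv$ after simplification. Concretely, for a fixed $x \in S$, one expects to use something like $u_1 = xg$, $v_1 = xh$ (these are arbitrary in $S$, so the $\beta = 1_S$ side imposes no constraint) together with a right-hand factor built from $g\inv, h\inv$; since $m(g,y,z) = ygz$, the equation $m(g, xh, g\inv) = m(g, xg, h\inv)$ reads $xh\,g\,g\inv = xg\,g\,h\inv$, i.e. $xh\,gg\inv = xg\,gh\inv$, which we can try to verify using $gg\inv = hh\inv$, $g\inv g = h\inv h$ and the commuting of $g,h$ with idempotents (which follows since, $\alpha$ being idempotent-separating, $N_\alpha$ consists of elements commuting with $E(S)$ by Lemma \ref{Lem:Mitsch}, though one must be careful that $g,h$ themselves need not lie in $N_\alpha$). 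Applying $\TC(m,\alpha,1_S)$ then gives $xh\,h\,h\inv = xh\,h\,g\inv$ — wait, this replaces only the first slot — so $m(h, xh, g\inv) = m(h, xg, h\inv)$, i.e. $xh\,h\,g\inv = xg\,h\,h\inv$; combined with the $\greenH$ relations this should collapse to the desired identity after left/right multiplication by appropriate idempotents. I would need to experiment with the exact choice of the two $S$-arguments so that the "input" equation is genuinely valid and the "output" equation, after multiplying by $gg\inv$ or $g\inv g$ on suitable sides, becomes $gxg\inv = hxh\inv$; a dual instance may be needed to get both containments or to handle the left/right asymmetry of $m$.

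The main obstacle I anticipate is precisely this bookkeeping: choosing the instantiation of $m$ so that the hypothesis equation holds for the right structural reason and the conclusion equation simplifies cleanly, all while respecting that the $\greenH$-class of $g$ has identity $gg\inv = g\inv g$ only as a two-sided identity for $g$ itself, not for arbitrary products. Once the correct substitution is pinned down, verifying it is a short idempotent-shuffling computation of the same flavor as the proofs of Theorem \ref{Thm:kernels} and Proposition \ref{Prp:just_m}. After establishing $\psi_g = \psi_h$ we conclude $(g,h) \in \ker(\Psi_S) = \zeta_S$, as required.
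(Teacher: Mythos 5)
Your step (a) is exactly the paper's first step: Lemma \ref{Lem:ab_intersect} applied with $\beta=1_S$ shows $\alpha=\alpha\cap 1_S$ is idempotent-separating, whence $\alpha\subseteq\greenH$ by Lemma \ref{Lem:below H iff tr=0}. The gap is in step (b): you never pin down the instance of $\TC(m,\alpha,1_S)$ that does the work, and the one candidate you float fails. Your proposed hypothesis equation $m(g,xh,g\inv)=m(g,xg,h\inv)$, i.e.\ $xh\,gg\inv=xg\,gh\inv$, is false in general even under the centrality hypothesis: already in the abelian group $\mathbb{Z}_3$ with $g=1$, $h=2$ it amounts to $2h=2g$, which fails. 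Any instantiation whose validity you try to ground in the $\greenH$-relations and commutation with idempotents will hit the same wall, because those facts control only the idempotents $gg\inv$, $g\inv g$, not products such as $g^2h\inv$.

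The idea you are missing is that the useful input to $\TC(m,\alpha,1_S)$ is a \emph{tautology}: $m(a,xay,z)=m(a,x,yaz)$, that is, $xay\cdot a\cdot z=x\cdot a\cdot yaz$, holds for all $x,y,z$ by associativity alone, with no structural hypotheses. Applying the term condition to it (replacing only the middle occurrence of $a$ by $b$) yields the exchange law $xaybz=xbyaz$ for all $x,y,z$. Even then one is not done: the paper still needs a nontrivial computation, padding with $aa\inv$, $bb\inv$, $a\inv a$, $b\inv b$ and invoking the exchange law three times, to deduce $axb=bxa$, i.e.\ $\alpha\subseteq\xi$, after which $\alpha\subseteq\greenH\cap\,\xi=\zeta_S$ by the part of Theorem \ref{thm:center} already proved in Section \ref{s:syntax}. (Your plan to land directly on $\psi_g=\psi_h$ rather than on $\xi$ is fine in principle, since the two targets are interchangeable given Section \ref{s:syntax}, but the substitution hunt you defer is precisely where the proof lives, so as it stands the proposal has a genuine gap.)
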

\begin{proof}
We use the characterization $\zeta_S = \greenH\cap\, \xi$ where $\xi=\{(a,b):axb=bxa$ for all $x\in S\}$.
The inclusion $\alpha\leq\greenH$ follows from Lemma \ref{Lem:ab_intersect}. For the other inclusion,
let $a\,\alpha\,b$. From the trivial identity $xay\underline{a}z = x\underline{a}yaz$, the term condition $\TC(m,\alpha,1_S)$ implies
  \begin{equation}\label{Eqn:xaybz}
    xaybz = xbyaz
  \end{equation}
  for all $x,y,z\in S$. Thus, for every $x\in S$,
  \begin{align*}
    axb &= aa\inv \underline{a}x\underline{b} b\inv b && \\
     &= aa\inv bxab\inv b && \text{by }\eqref{Eqn:xaybz}  \\
     &= aa\inv bb\inv \underline{b}x\underline{a}a\inv ab\inv b &&  \\
     &= \underline{aa\inv}\underline{bb\inv} axb\underline{a\inv a}\underline{b\inv b} && \text{by }\eqref{Eqn:xaybz} \\
     &= bb\inv aa\inv axb b\inv b a\inv a && \text{since idempotents commute}\\
     &= bb\inv \underline{a}x\underline{b}a\inv a &&\\
     &= bb\inv bxaa\inv a &&  \text{by }\eqref{Eqn:xaybz}\\
     &= bxa\,. && \qedhere
  \end{align*}
\end{proof}

\begin{lem}\label{Lem:center2}
Let $S$ be an inverse semigroup. Then $\zeta_S$ is a central congruence.
\end{lem}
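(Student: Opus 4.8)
The plan is to deduce this immediately from Theorem \ref{Thm:xi_TC}, which asserts that if $\alpha\le\zeta_{N_\beta}$ then $\alpha$ centralizes $\beta$. I will apply it with $\alpha=\zeta_S$ and $\beta=1_S$. The key bookkeeping observation is that the kernel of the all-relation is everything: since $1_S=S\times S$ has a single class, which contains all idempotents, we have $N_{1_S}=S$. Note that $S$ is itself a normal inverse subsemigroup of $S$ (it is full and closed under conjugation), so $\zeta_{N_{1_S}}=\zeta_S$ is the congruence already studied in Theorem \ref{Thm:kernels}.

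With that identification in hand, the hypothesis of Theorem \ref{Thm:xi_TC} reads $\zeta_S\le\zeta_S$, which holds trivially. Hence $\zeta_S$ centralizes $1_S$, and by definition this says precisely that $\zeta_S$ is a central congruence. So the argument is a one-line invocation; there is no real obstacle, only the need to recall the definition of $N_\beta$ and to check that $N$ is allowed to be the whole semigroup when forming $\zeta_N$.

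Finally, combining this with Lemma \ref{Lem:center1} (every central congruence is contained in $\zeta_S$) shows that $\zeta_S$ is the largest central congruence, i.e. $\zeta_S=\zeta(S)$ in the sense of \cite{FM}; together with Theorem \ref{Thm:kernels} (for $N=S$) this identifies its congruence pair as $(Z(S),0_{E(S)})$ and completes the remaining part of Theorem \ref{thm:center} as well as Theorem \ref{Thm:abelian and central congruence}(2).
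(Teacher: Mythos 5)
Your proof is correct and is exactly the paper's argument: apply Theorem \ref{Thm:xi_TC} with $\beta=1_S$, noting $N_{1_S}=S$ so the hypothesis $\zeta_S\le\zeta_{N_{1_S}}=\zeta_S$ is trivial. The concluding remarks about largeness and the congruence pair also match what the paper states after the lemma.
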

\begin{proof}
Apply Theorem \ref{Thm:xi_TC} to the case $\beta = 1_S$ with $N_{\beta} = S$.
\end{proof}

Theorems \ref{Thm:abelian and central congruence}(2) and the remaining part of \ref{thm:center} follow from Lemmas \ref{Lem:center1} and \ref{Lem:center2}.
In particular, the center $\zeta_S$ is the largest central congruence whose congruence pair is $(Z(S),0_{E(S)})$.

\section{Nilpotent and solvable inverse semigroups}\label{s:nilpotence}

An algebraic structure $A$ is called \emph{abelian} if $\zeta(A) = 1_A$, or, equivalently, if the congruence $1_A$ is abelian.
It is called \emph{nilpotent} (resp. \emph{solvable}) if there is a chain of congruences
\[
0_A = \alpha_0\leq\alpha_1\leq\ldots\leq\alpha_n=1_A
\]
such that $\alpha_{i+1}/\alpha_{i}$ is a central congruence (resp. an abelian congruence) of $A/\alpha_{i}$, for all $i$.
The length of the smallest such series is called the \emph{nilpotency class} (resp. \emph{solvability length}) of $A$.
Alternatively, we can define the \emph{upper central series} inductively by $\zeta_0(A)=0_A$ and $\zeta_{i+1}(A)$ to be the preimage of $\zeta(A/\zeta_i(A))$ and say that $A$ is nilpotent of class $\leq n$ if $\zeta_n(A)=1_A$.

In groups, the notions of abelianness, nilpotence and solvability coincide with the classical terminology. It follows immediately from Theorem \ref{thm:center}(4) that an abelian inverse semigroup is an abelian group, which can also be seen as an application of the main result of \cite{Warne}.

\begin{lem}\label{lm:solvnilp1}
Let $S$ be an inverse semigroup and let $\alpha$ be an idempotent-separating congruence. Then $|E(S/\alpha)|=|E(S)|$.
\end{lem}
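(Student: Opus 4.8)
The plan is to show that the quotient map $\pi\colon S\to S/\alpha$ restricts to a bijection $E(S)\to E(S/\alpha)$; the equality of cardinalities is then immediate.

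First I would record that $\pi$ is a surjective homomorphism of inverse semigroups, so it carries idempotents to idempotents, giving a well-defined map $\pi|_{E(S)}\colon E(S)\to E(S/\alpha)$. Surjectivity of this restriction is exactly the content of the version of Lallement's Lemma proved above: applying Lemma~\ref{Lem:homom_images} with $T=S/\alpha$, the image $\img(\pi)=S/\alpha$ is an inverse semigroup and \emph{every} idempotent of $\img(\pi)$ is the image under $\pi$ of an idempotent of $S$. Hence $\pi|_{E(S)}$ is onto $E(S/\alpha)$.

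Injectivity is where idempotent-separation enters. Since $\alpha$ is idempotent-separating, $\tr(\alpha)=\alpha|_{E(S)}=0_{E(S)}$, so for $e,f\in E(S)$ the relation $e\,\alpha\,f$ forces $e=f$; equivalently $\pi(e)=\pi(f)$ implies $e=f$. Thus $\pi|_{E(S)}$ is injective, hence a bijection, and $|E(S/\alpha)|=|E(S)|$ as claimed.

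There is no real obstacle here: the only point needing any care is invoking the right form of Lallement's Lemma to obtain surjectivity onto the idempotents of the quotient (rather than merely into $E(S/\alpha)$), and this is already available as Lemma~\ref{Lem:homom_images}. Everything else is the definition of an idempotent-separating congruence via its trace.
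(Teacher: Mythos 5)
Your proof is correct and follows essentially the same route as the paper: the paper establishes $|E(S/\alpha)|\leq|E(S)|$ by invoking Lemma~\ref{Lem:homom_images} to lift each idempotent of the quotient to an idempotent of $S$, and $|E(S/\alpha)|\geq|E(S)|$ from idempotent-separation, which is exactly your surjectivity/injectivity split for $\pi|_{E(S)}$. No issues.
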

\begin{proof}
The congruence $\alpha$ is idempotent-separating, hence $|E(S/\alpha)|\geq|E(S)|$.
On the other hand, if $[a]\in E(S/\alpha)$, then, by Lemma \ref{Lem:homom_images}, there exists $e\in E(S)$ such that $a\,\alpha\,e$, hence $|E(S/\alpha)|\leq|E(S)|$.
\end{proof}

\begin{proof}[Proof of Theorem \ref{Thm:solvnilp}]
Since nilpotence implies solvability, it is sufficient to show that solvable inverse semigroups are groups.
We proceed by induction on $n$, the solvability length of $S$. If $n=1$, then $S$ is an abelian group and the statement holds.
In the induction step, consider the chain $0_S=\alpha_0\leq\alpha_1\leq\ldots\leq\alpha_n=1_S$ witnessing solvability. Then $S/\alpha_1$ is solvable of class $n-1$, hence a group. Since $\alpha_1$ is abelian, it is idempotent-separating by Lemma \ref{Lem:abelian1}, hence $|E(S)|=|E(S/\alpha_1)|=1$ by Lemma \ref{lm:solvnilp1}, and thus $S$ is a group, too.
\end{proof}

\section{Earlier ideas on centrality and nilpotence}\label{s:others}

\subsection{Classical center}

Classically, the term ``center'' in semigroup theory refers to the subsemigroup
\[
C(S) = \{ a\in S\mid ax = xa \text{ for all } x\in S\}\,,
\]
analogous to the use of the word in groups and rings.
Note that $C(S)\subseteq Z(S)$. Indeed, if $a\in C(S)$, then for all $x\in S$,
$aa\inv xa = aa\inv \underline{xa}a\inv a = aa\inv \underline{ax}a\inv a = axa\inv a$.

An inverse semigroup $S$ is a \emph{Clifford semigroup} if $E(S)\subseteq C(S)$,
that is, if idempotents commute with all elements of $S$. A Clifford semigroup $S$ is a semilattice of groups,
$S = \bigcup_{i\in I} S_i$.

\begin{prp}\label{Prp:ZCliff}
  An inverse semigroup $S$ is a Clifford semigroup if and only if $Z(S) = C(S)$.
\end{prp}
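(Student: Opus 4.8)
The plan is to prove the two implications separately, using the characterization $Z(S) = \{a \in S \mid axa\inv a = aa\inv x a,\ \forall x\in S\}$ from Theorem \ref{Thm:abelian and central congruence} together with the earlier observation that $C(S) \subseteq Z(S)$ always holds.

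For the forward direction, suppose $S$ is a Clifford semigroup, so $E(S) \subseteq C(S)$. I want to show $Z(S) \subseteq C(S)$; combined with $C(S)\subseteq Z(S)$ this gives equality. Take $a \in Z(S)$ and $x \in S$. The defining identity gives $axa\inv a = aa\inv xa$. Now I would exploit the Clifford structure: since idempotents are central, $a\inv a$ and $aa\inv$ commute with everything, and in fact in a Clifford semigroup $a\inv a = aa\inv$ for every $a$ (the $\mathcal H$-class of $a$ is a group with identity $aa\inv = a\inv a$). So the identity simplifies: the left side becomes $axa\inv a = a (a\inv a) x$ — wait, more carefully, using centrality of the idempotent $a\inv a$ we get $axa\inv a = a a\inv a x = ax$, and the right side $aa\inv xa = x a a\inv a = xa$. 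Hence $ax = xa$ for all $x$, i.e. $a \in C(S)$. The key step is recognizing $a\inv a = aa\inv$ in a Clifford semigroup and pushing the central idempotents past $x$; this is where the Clifford hypothesis does all the work, and it should be a short calculation.

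For the converse, suppose $Z(S) = C(S)$. I must show every idempotent lies in $C(S)$, i.e. $E(S) \subseteq C(S)$. Since $E(S) \subseteq Z(S)$ always (idempotents are full, and for $e \in E(S)$ one checks $exe\inv e = exe = ee\inv x e$ directly because $E(S)$ is commutative — actually one should verify $exe = exe$ and $eexe = exe$, so indeed $e \in Z(S)$), the hypothesis $Z(S) = C(S)$ immediately forces $E(S) \subseteq C(S)$, which is exactly the definition of Clifford semigroup. So the converse is essentially immediate once we confirm the elementary fact that idempotents always belong to $Z(S)$; the only thing to check is that for $e \in E(S)$ and any $x$, $exe\inv e = ee\inv x e$, i.e. $exe = exe$ (using $e\inv = e$ and commutativity of idempotents to rearrange $ee\inv xe = exe$ after moving idempotents around) — this reduces to $exe = exe$ and is trivially true.

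I expect the main (mild) obstacle to be the forward direction: one has to be careful about which idempotents are central and to correctly use $aa\inv = a\inv a$ for elements of a Clifford semigroup, since this equality fails in general inverse semigroups and is precisely the feature that makes the argument go through. Everything else is bookkeeping with commuting idempotents, and I would not expect to need anything beyond the basic facts recalled in Section 2 and the description of $Z(S)$.
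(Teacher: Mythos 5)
Your proof is correct and follows essentially the same route as the paper: the converse direction uses only that $E(S)\subseteq Z(S)$ always holds, and the forward direction pushes the central idempotents $a\inv a$ and $aa\inv$ past $x$ in the defining identity $axa\inv a=aa\inv xa$ to get $ax=xa$. (The fact $aa\inv=a\inv a$ you invoke is true in Clifford semigroups but not actually needed — centrality of the two idempotents already suffices, which is exactly how the paper's one-line computation runs.)
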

\begin{proof}
$(\Leftarrow)$. Since $E(S)\subseteq Z(S)$ in every inverse semigroup, we obtain $E(S)\subseteq C(S)$ and so $S$ is Clifford.
$(\Rightarrow)$. If $S$ is a Clifford semigroup and if $a\in Z(S)$, then for all $x\in S$,
  $ax = aa\inv ax = axa\inv a = aa\inv xa = xaa\inv a = xa$, and so $a\in C(S)$.
\end{proof}

\subsection{Kowol-Mitsch, Meldrum, and Piochi}

Kowol and Mitsch \cite{Kowol_Mitsch} and Meldrum \cite{Meldrum} studied a notion of nilpotence for Clifford semigroups.
Instead of the upper central series of congruences, they considered the ascending series $E(S) = Z_0(S)\leq Z_1(S)\leq \cdots$
of kernel parts.
A Clifford semigroup $S$ is \emph{KMM-nilpotent} if, for some $n\geq 0$, $Z_n(S) = S$; the smallest such $n$ is the \emph{KMM-nilpotency class}
of $S$. (This is not the definition of nilpotence in either \cite{Kowol_Mitsch} or \cite{Meldrum}, but it is equivalent \cite[Cor. 3.9]{Meldrum}.)
A Clifford semigroup $S = \bigcup_{i\in I} S_i$ is KMM-nilpotent of class $n$ if and only if the groups $S_i$ are all nilpotent of class
at most $n$ and at least one of the groups attains that class.

While a nilpotent (in the universal algebra sense) Clifford semigroup is certainly KMM-nilpotent (because it is a group), the converse need
not be true. This is because the condition $Z_n(S) = S$ does not imply $\zeta_n(S) = 1_S$. For instance, a Clifford semigroup $S$
is KMM-nilpotent of class $0$ if and only if it is a semilattice, while $S$ is KMM-nilpotent of class $1$ if and only if it is commutative.

Meldrum also defined a notion of solvability for Clifford semigroups which was later extended to arbitrary inverse semigroups by
Piochi \cite{Piochi}. We will call this \emph{MP-solvability}. We refer to the original papers for the definition. MP-solvability is implied by
universal algebra solvability but the converse is false; for example, an inverse semigroup is MP-solvable of class $1$ if and only if it is commutative.

\subsection{Mal'cev}

For variables $a, b, z_0, z_1,\ldots$, define two sequences of semigroup words by
\begin{align*}
\lambda_0 = a,  \quad &\rho_0 = b \\
\lambda_{n+1} = \lambda_n z_n \rho_n, \quad & \rho_{n+1} = \rho_n z_n \lambda_n
\end{align*}
Then the semigroup $S$ is \emph{Mal'cev nilpotent} \cite{Malcev} if there exists $n$ such that the identity $\lambda_n = \rho_n$ holds for all $a,b,z_0,\ldots,z_{n-1}\in S$. The nilpotency class of $S$ is the smallest $n$ satisfying this condition. Mal'cev proved that in groups, his nilpotence is the same as the usual nilpotence, and thus group nilpotence can be entirely described in terms of semigroup identities.

Mal'cev nilpotence and its variants have been extensively studied in the semigroup literature \cite{AS22,ASK,JO}. For simplicity, we are following Mal'cev's original definition, but some authors follow Lallement \cite{Lallement} and assume the variables $z_0,z_1,\ldots$ are allowed to take values in $S^1$, giving a slightly stronger notion of nilpotence.

Meldrum \cite{Meldrum} observed that for $n > 0$, a Clifford semigroup $S = \bigcup_{i\in I}S_i$ is Mal'cev nilpotent of class at most $n$ if and only if each group $S_i$ is nilpotent of class at most $n$, and thus KMM-nilpotence and Mal'cev nilpotence coincide for Clifford semigroups \cite[p. 10]{Meldrum}. (The two notions differ at class $0$.)

According to Theorem \ref{Thm:solvnilp}, inverse semigroups which are nilpotent in the universal algebra sense are groups. By contrast, the
Brandt semigroup of order $5$ is Mal'cev nilpotent of class $2$. Thus the two notions of nilpotence do not coincide.

Now suppose we view the equation $\lambda_n = \rho_n$ as defining a binary relation on $S$:
\[
a\, \mu_n\, b \text{ iff } \lambda_n = \rho_n \text{ for all }z_0, ..., z_{n-1}\in S\,.
\]
In inverse semigroups, these relations turn out to be tolerances: reflexive, symmetric and compatible (i.e., inverse subsemigroups of $S\times S$). We will call these the \emph{Mal'cev tolerances} of $S$. Here are the first few written explicitly:
\begin{align*}
a\, \mu_0\, b  &\text{ iff }  a = b, \\
a\, \mu_1\, b  &\text{ iff }  a z_0 b = b z_0 a \text{ for all }z_0\in S\,,  \\
a\, \mu_2\, b  &\text{ iff }  a z_0 b z_1 b z_0 a = b z_0 a z_1 a z_0 b \text{ for all }z_0,z_1\in S\,.
\end{align*}
By Theorem \ref{thm:center}, $\zeta_1(S) = \greenH\cap\,\mu_1$. Using Prover9 \cite{McCune}, we have verified that $\zeta_2(S) = \greenH\cap\,\mu_2$.

\begin{prb}
Let $S$ be an inverse semigroup with upper central series $0\leq \zeta_1(S)\leq \zeta_2(S)\leq \cdots$ and Mal'cev tolerances
$\mu_0, \mu_1,\ldots$. Does $\zeta_n(S) = \mathcal{H}\cap\,\mu_n$ hold for all $n\geq 0$?
\end{prb}

\end{document}